\NeedsTeXFormat{LaTeX2e}
[1994/12/01]
\documentclass[manuscript, reqno]{aomart}

\chardef\bslash=`\\ 

\newtheorem[{}\it]{thm}{Theorem}[section]

\newtheorem{lem}[thm]{Lemma}
\newtheorem{prop}[thm]{Proposition}

\theoremstyle{remark}

\usepackage{dsfont}
\usepackage{amssymb}

\usepackage{hyperref} 
\usepackage[numeric]{amsrefs}

\usepackage[english]{babel}

\newcommand{\symnn}{\mathrm{Sym}_{0}(3)}
\newcommand{\trr}{\mathrm{tr}}
\newcommand{\hkern}{e^{(t-s)\Delta}}

\theoremstyle{definition}
\newtheorem{defn}{Definition}[section]
\newtheorem{rem}{Remark}[section]
\newtheorem*[{}\it]{notation}{Notation}

\newtheorem*[{}\it]{rest}{\textsc{Theorem}}
\newtheorem*[{}\it]{proofoflemma}{Proof of Lemma}

\usepackage[textwidth=6in,textheight=9.5in, paperwidth=7.5in,paperheight=11.25in, inner=2.5cm, outer=2cm]{geometry} 

\usepackage{xcolor}

\usepackage{multicol}

\linespread{1.2}

\title[]{Dynamic Statistical Scaling in the Landau-de Gennes Theory of Nematic Liquid Crystals}

\author[]{Eduard Kirr, Mark Wilkinson and Arghir Zarnescu}

\keyword{Navier-Stokes}
\keyword{Condensed Matter}
\subject{primary}{matsc2000}{123456789}
\subject{secondary}{matsc2000}{FFFF}

\volumenumber{1}
\issuenumber{1}
\publicationyear{2011}
\papernumber{1}
\startpage{1}
\endpage{}

\version{1.1}

\setcounter{tocdepth}{1}

\definecolor{orange}{rgb}{0.2,0.7,0.2}

\hypersetup{citecolor=red}

\usepackage{marvosym}

\bibliographystyle{amsplain}

\begin{document}

\begin{abstract} In this article, we investigate the long time behaviour of a correlation function $c_{\mu_{0}}$ which is associated with a nematic liquid crystal system that is undergoing an isotropic-nematic phase transition. Within the setting of Landau-de Gennes theory, we confirm a hypothesis in the condensed matter physics literature on the average self-similar behaviour of this correlation function in the asymptotic regime at time infinity, namely
\begin{equation*}
\left\|c_{\mu_{0}}(r, t)-e^{-\frac{|r|^{2}}{8t}}\right\|_{L^{\infty}(\mathbb{R}^{3}, \,dr)}=\mathcal{O}(t^{-\frac{1}{2}}) \quad \text{as} \quad t\longrightarrow \infty.
\end{equation*}
In the final sections, we also pass comment on another scaling regime of the correlation function.
\end{abstract}
\maketitle


\section{Introduction}\label{introduckshun}

Nematic liquid crystals form a class of condensed matter systems whose constituent rod-like molecules give rise to rich macroscopic nonlinear phenomena. Thermotropic nematic liquid crystals are a well-studied subclass of nematics whose optical properties change dramatically with variation of system temperature. It is observed in the laboratory that above a certain temperature threshold, depending on the particular material under study, the rod-like molecules exhibit no local orientation preference. This is commonly known as the {\em isotropic} phase of the material. However, reducing the temperature of the material below this threshold results in the molecules arranging themselves along locally preferred directions, yielding the {\em nematic} phase. 

Models for the dynamics of this transition from isotropy to a nematic phase in liquid crystals present many challenging mathematical questions, both in continuum PDE theories and also in statistical mechanics. In the laboratory, the transition between the isotropic and nematic phases is observed to be effected by the seeding and subsequent growth of nematic `islands' in the ambient isotropic phase. The characteristic length scale $L\equiv L(t)$ of these island-like structures increases with time once the system is coerced into a transition of phase. It is argued in the physics literature (see \textsc{Bray} \cite{bray1}) that the domain coarsening of the nematic phase is a scaling phenomenon, namely the \emph{structure} of the nematic profile at late times looks \emph{statistically self-similar} to those profiles at an earlier time. We invite the reader to consult the extensive review article of \textsc{Bray} \cite{bray1} for a helpful introduction to this area of condensed matter physics. Laboratory experiments performed by \textsc{Pargellis et al.} \cite{PhysRevE.49.4250}, in which a nematic system was contrived to resemble a dynamic XY-model, support this argument.

It is the aim of this paper to study a model which captures such behaviour of thermotropic nematic systems in a mathematically rigorous manner. In particular, we wish to show that the average long time behaviour of solutions of a suitable model equation is spatially self similar, in an average sense. We work within the framework of continuum PDE theory, as opposed to the setting of statistical mechanics. 

As the isotropic-nematic phase transition is inherently dynamic, we require an appropriate evolution equation to model this phenomenon. The model we employ in this paper is the $L^{2}(\mathbb{R}^{3})$-gradient flow of the well-studied Landau-de Gennes energy. This energy is given by
\begin{equation}\label{ldg}
E_{\mathrm{LdG}}[Q]:=\int_{\mathbb{R}^{3}}\left(\frac{1}{2}|\nabla Q|^{2}+\frac{a(\vartheta)}{2}\mathrm{tr}\left(Q^{2}\right)-\frac{b}{3}\mathrm{tr}\left(Q^{3}\right)+\frac{c}{4}\mathrm{tr}\left(Q^{2}\right)Q\right)\,dx
\end{equation}
for appropriately regular maps $Q$, the gradient flow of which is
\begin{equation}\label{gradientflowings}
\frac{\partial Q}{\partial t}=\Delta Q-a(\vartheta)\,Q+b\left(Q^{2}-\frac{1}{3}\mathrm{tr}\left(Q^{2}\right)I\right)-c\,\mathrm{tr}\left(Q^{2}\right)Q.
\end{equation}
We refer the reader to \textsc{Ball} \cite{bally, bally2} or \textsc{Zarnescu} \cite{MR2962825} for an introduction to analytical and topological aspects of Landau-de Gennes theory.

In order to achieve our goal, we must first state in precise quantitative terms what we mean by \emph{structure} of nematic profiles and also by \emph{statistical self-similarity}. However, antecedent to both of these ideas is the notion of {\em order parameter}, which we now discuss in section \ref{orderparam} before introducing correlation functions in section \ref{structure} (related to {\em structure}), and also statistical solutions of PDEs in section \ref{statisticalsimilarity} (related to {\em statistical self-similarity}). Indeed, both of these concepts lead us naturally to the mathematical problem of obtaining a phase portrait for a quantity (a correlation function) describing the average behaviour of solutions of the gradient flow of \eqref{ldg} above. 

Although this article addresses a physical problem in the theory of phase transitions, the mathematical framework in which we address the problem itself is of wider import. Indeed, we argue that one might view the study of the correlation function as a convenient simplification of the infinite-dimensional dynamics generated by the PDE \eqref{gradientflowings} on a natural phase space, which captures one essential qualitative property of its solutions. 

\subsection{The de Gennes Q-tensor Order Parameter}\label{orderparam}
The first task when building a continuum model of such phase transition phenomena is to decide upon an appropriate order parameter that captures small scale material structure and allows one to distinguish between different phases of the material under study. In this article, we work with the Q-tensor order parameter which is able to describe both uniaxial and biaxial phases of nematic liquid crystals, as opposed to the director-field formalism of Ericksen-Leslie theory \cite{les1} or Oseen-Frank theory \cite{frank1}.

Let us suppose that to each point $x$ in a material domain $\Omega\subseteq\mathbb{R}^{3}$ we associate a probability density function $\rho_{x}$ on molecular orientations which lie in $\mathbb{S}^{2}$. In order to model the $\mathbb{Z}_{2}$ head-to-tail symmetry of nematic molecules, each density is endowed with the antipodal symmetry $\rho_{x}(n)=\rho_{x}(-n)$ for all $n\in\mathbb{S}^{2}$.  It was the idea of the physicist Pierre-Gilles de Gennes that one might consider the essential macroscopic information of the system to be contained in the matrix of second moments of $\rho_{x}$ with respect to molecular orientations. The \emph{de Gennes Q-tensor order parameter} (with respect to the density $\rho_{x}$) is defined to be
\begin{equation}\label{qtendef}
Q(x):=\int_{\mathbb{S}^{2}}\left(n\otimes n - \frac{1}{3}I\right)\,\rho_{x}(n)\,dn.
\end{equation}
It is a normalised matrix of second moments of $\rho_{x}$, and one may quickly check that $Q(x)$ is a traceless and symmetric $3\times 3$ matrix.

At the level of probability measures on molecular orientations, the uniform density $\overline{\rho}=1/|\mathbb{S}^{2}|$ corresponds to the isotropic phase of nematics. It is important to note that the term $-1/3 I$ (which contains no information about the system) is included in the above definition \eqref{qtendef} of the Q-tensor by convention, so as to render $Q$ identically zero in the isotropic phase. The Q-tensor may then be thought of as a macroscopic order parameter that measures the deviation of the system from isotropy. For a more detailed introduction to the Q-tensor order parameter, one might wish to consult \textsc{de Gennes and Prost} \cite{degennes1}, \textsc{Majumdar} \cite{majumdar1} or \textsc{Newton and Mottram} \cite{mottram1}. 

We denote the order parameter manifold of all such matrices by $\mathrm{Sym}_{0}(3)$, namely
\begin{equation}
\mathrm{Sym}_{0}(3):=\left\{Q\in\mathbb{R}^{3\times 3}\,:\, Q^{T}=Q\quad \text{and}\quad \mathrm{tr}(Q)=0\right\},\nonumber
\end{equation}
where $\mathrm{tr}:\mathbb{R}^{3\times 3}\rightarrow \mathbb{R}$ denotes the familar matrix trace operator. We subsequently refer to maps which take their values in $\textrm{Sym}_{0}(3)$ as nematic profiles.
\subsection{`Structure' of Nematic Profiles: Correlation Functions}\label{structure}
The measure of structure of nematic profiles $Q: \mathbb{R}^{3}\times (0, \infty)\rightarrow\mathrm{Sym}_{0}(3)$ we employ in this paper is related to the normalised two-point spatial correlation function $c:\mathbb{R}^{3}\times (0, \infty)\rightarrow\mathbb{R}$ defined by
\begin{equation}\label{corryfunny}
c(r, t):=\frac{\displaystyle \int_{\mathbb{R}^{3}}\mathrm{tr}\left(Q(x+r, t)Q(x, t)\right)\,dx}{\displaystyle \int_{\mathbb{R}^{3}}\mathrm{tr}\left(Q(x, t)^{2}\right)\,dx}.
\end{equation}
One may think of $c$ as quantifying how correlated different regions of a nematic profile are with one another and thus as some measure of the spatial structure of the system as it evolves over time. For further information on the importance of correlation functions in condensed matter physics, we recommend that the reader consult \textsc{Sethna} (\cite{Seth06}, chapter 10). An important point to be emphasised is that correlation functions are experimentally measurable by scattering experiments.

As mentioned above, it is argued in the physics literature (see for instance \textsc{Denniston et al.} \cite{PhysRevE.64.021701} or \textsc{Zapotocky et al.} \cite{zapotocky1}) that once a system is coerced into an isotropic-nematic phase transition, the correlation function assumes a self-similar form asymptotically in time. In summary, 
\begin{equation*}\label{corrfunsingle}
\langle c(r, t) \rangle \sim \Gamma\left(\frac{r}{L(t)}\right) \quad \text{in an appropriate topology as} \quad t\longrightarrow \infty,
\end{equation*}
where $\Gamma$ is some `universal' scaling function and $L(t)$ is the aforementioned characteristic length scale of nematic domains which invade the isotropic phase. With the interests of the experimental physicist still in mind, the angular brackets $\langle\cdot\rangle$ signify that one should consider a suitable average value of the correlation function computed over many repeated experimental trials, a process which `smoothes out' anomalous data gathered from experiment. If one then wishes information on the structure of this system for large values of time $t$, one need only rescale in the spatial variable $r$ to compute the appropriate value of $\langle c(r, t)\rangle$.

In order to model such an averaging procedure in a mathematically rigorous manner, we employ the notion of statistical solution of evolution equations introduced by \textsc{Foia\c{s}} \cite{MR0287390} and also by \textsc{Vishik and Fursikov} \cite{fursikov1}. This notion of solution to nonlinear PDE, which can simply be seen as a kind of measure-valued weak solution of a PDE, has been studied many authors in the mathematical theory of the Navier-Stokes equations to explore aspects of turbulence theory. We now outline the significance of statistical solutions in our study of phase transitions.
\subsection{`Statistical Self-similarity': Statistical Solutions of PDEs}\label{statisticalsimilarity}
One can show that the semiflow $\{S(t)\}_{t\geq 0}$ of the gradient flow \eqref{gradientflowings} of $E_{\mathrm{LdG}}$ possesses neither a global attractor nor an inertial manifold in the natural phase space $H:=L^{2}(\mathbb{R}^{3})\cap L^{6}(\mathbb{R}^{3})$, which is unfortunate as both objects are well suited to revealing asymptotic structure of dynamics. For more information on such attracting sets in PDE phase space problems, one could consult \textsc{Constantin, Foia\c{s}, Nicolaenko and Temam} \cite{MR966192} or \textsc{Robinson} \cite{MR1881888}. Since we are interested in studying the time asymptotics of the correlation function, this presents a significant challenge. Thus, acknowledging the structure of the ensemble of solution trajectories
\begin{equation*}
\mathcal{M}:=\bigcup_{Q_{0}\in H}\bigg\{\{Q(\cdot, t)\}_{t\geq 0}\,:\,Q(\cdot, 0)=S(0)Q_{0}=Q_{0}\bigg\},
\end{equation*}
as being rather complicated, we ask what one could say if one turns instead to the ensemble of measure-valued statistical solutions
\begin{equation*}
\mathcal{N}:=\bigcup_{\mu_{0}\in \mathsf{M}_{0}(H)}\bigg\{\{\mu_{t}\}_{t\geq 0}\,:\,\mu_{t}|_{t=0}=\mu_{0}\bigg\},
\end{equation*}
where $\mathsf{M}_{0}(H)$ denotes the set of all Borel probability measures on $H$.
The ensemble $\mathcal{N}$ is `coarser' than $\mathcal{M}$ in terms of information, in the sense that each such statistical solution $\mu_{t}$ of \eqref{gradientflowings} at time $t$ is insensitive to changes on sets of $\mu_{t}$-measure 0. In order to make use of this insensitivity to `fine structure' of the trajectory ensemble $\mathcal{M}$, and to characterise one qualitative aspect of solutions of \eqref{gradientflowings} on $H$, we rigorously define the correlation function $c$ previously introduced in \eqref{corryfunny} as
\begin{equation}\label{corrfun}
c_{\mu_{0}}(r, t):=\frac{\displaystyle\int_{H}\int_{\mathbb{R}^{3}}\mathrm{tr}\left(Q(x+r)Q(x)\right)\,dxd\mu_{t}(Q)}{\displaystyle\int_{H}\int_{\mathbb{R}^{3}}\mathrm{tr}\left(Q(x)^{2}\right)\,dxd\mu_{t}(Q)},
\end{equation}
where $\mu_{0}$ is a Borel probability measure on $H$. It is with this definition of the correlation function we work throughout this paper, as opposed to the earlier-given definition which is supported only on single trajectories in $\mathcal{M}$. Moreover, it allows us to give good mathematical sense to the average $\langle\cdot \rangle$ discussed above, namely $\langle c(r, t)\rangle:=c_{\mu_{0}}(r, t)$.

As we mentioned before, the gradient flow \eqref{gradientflowings} takes the shape of a nonlinear heat system for the evolution of nematic profiles $Q$. As this equation generates a strongly-continuous semigroup of solution operators $\{S(t)\}_{t\geq 0}$ on phase space $H$, it is possible to deal more concretely with the evolution of initial measures as opposed to the so-called Liouville equation formalism, which can be adopted when a semigroup of solution operators is not readily available: see \textsc{Foia\c{s} et al.} \cite{MR1855030}, chapter V for more information on this more general definition of statistical solution. To be precise, for a given suitable initial Borel measure $\mu_{0}$ supported on $H$, we construct in Section \ref{sectionfour} an associated one-parameter family of measures $\{\mu_{t}\}_{t\geq 0}$ with the property that
\begin{equation*}
\mu_{t}(E)=\lim_{k\rightarrow\infty}\sum_{j=1}^{N(k)}\vartheta_{j}^{(k)}\delta_{S(t)\overline{Q}_{j}^{(k)}}(E)
\end{equation*}
for measurable subsets $E\subseteq H$, nematic profiles $\overline{Q}_{j}^{(k)}\in H$ and $t\geq0$. One can then view each statistical solution $\{\mu_{t}\}_{t\geq 0}\in\mathcal{N}$ as a convenient concatenation of individual solutions $\{Q(\cdot, t)\}_{t\geq 0}\in\mathcal{M}$. 

In the study of long time behaviour of dynamical systems generated by evolution equations, one is often interested in constructing a {\em phase portrait} for all solution trajectories, including the location of any attracting sets of the dynamics. We store some remarks on the construction of a phase portrait for the correlation function $c_{\mu_{0}}$ in the closing section of this article.
\subsection{The Mathematical Model}\label{model}
We consistently employ the Einstein summation convention over repeated lower indices in the sequel. Let us now briefly discuss the main features of the models \eqref{ldg} and \eqref{gradientflowings} which make them suitable for the study of phase transitions.

In accordance with the Landau theory of phase transitions, the free energy density $f$ is assumed to be a smooth map which is frame indifferent, in our case invariant under the action of the group $\mathrm{O}(3)$:
\begin{equation}\label{hemitrope}
f\left(T_{\textsf{R}}D, \textsf{R}Q\textsf{R}^{T}\right)=f\left(D, Q\right),
\end{equation} 
where $Q\in \mathrm{Sym}_{0}(3)$ and $(T_{\textsf{R}}D)_{ijk}:=R_{i\ell}R_{jm}R_{kn}D_{\ell m n}$ for all $\textsf{R}\in\mathrm{O}(3)$ whenever $D$ is a rank 3 tensor. We assume that the corresponding free energy has the form 
\begin{equation*}
\int_{\mathbb{R}^{3}}f(\nabla Q(x), Q(x))\,dx =\int_{\mathbb{R}^{3}}\big(w(\nabla Q(x), Q(x))+f_{\mathrm{B}}(Q(x))\big)\,dx,
\end{equation*}
in which the elastic and bulk terms are decoupled. One may show by invariant theory that any such smooth isotropic map $f_{\mathrm{B}}$ is a smooth function of the principle invariants of $\mathrm{O}(3)$
\begin{equation*}
\mathrm{tr}\left(Q^{2}\right)\qquad\text{and}\qquad \mathrm{tr}\left(Q^{3}\right),
\end{equation*}
recalling that $\mathrm{tr}(Q)=0$. As stated above, we consider the energy functional $E_{\mathrm{LdG}}$, namely
\begin{equation*}
E_{\mathrm{LdG}}[Q]:=\int_{\mathbb{R}^{3}}\left(\frac{1}{2}|\nabla Q(x)|^{2}+\frac{a(\vartheta)}{2}\mathrm{tr}\left(Q^{2}\right)-\frac{b}{3}\mathrm{tr}\left(Q^{3}\right)+\frac{c}{4}\left(\mathrm{tr}\left(Q^{2}\right)\right)^{2}\right)\,dx,
\end{equation*} 
to which the elastic energy contribution is simply the Dirichlet energy, although one could work with more a general elastic energy of the form
\begin{equation*}
L_{1}|\nabla Q|^{2}+L_{2}Q_{ij, j}Q_{ik, k}+L_{3}Q_{ik, j}Q_{ij, k}+L_{4}Q_{k\ell}Q_{ij, k}Q_{ij, \ell},
\end{equation*} 
which is compatible with the symmetry \eqref{hemitrope} above. Moreover, we have chosen to employ the simplest bulk energy density $f_{\mathrm{B}}$ that is able to predict an isotropic-nematic phase transition, namely
\begin{equation}\label{bulk}
f_{\mathrm{B}}(Q):=\frac{a(\vartheta)}{2}\mathrm{tr}\left(Q^{2}\right)-\frac{b}{3}\mathrm{tr}\left(Q^{3}\right)+\frac{c}{4}\left(\mathrm{tr}\left(Q^{2}\right)\right)^{2},
\end{equation} 
where $a=\alpha(\vartheta-\vartheta_{\ast})$, $\vartheta_{\ast}>0$ is a critical temperature and $\vartheta\in\mathbb{R}$ is a temperature parameter controlling the depths of the nematic energy wells. In the low-temperature regime in which the nematic phase is energetically favourable, the material-dependent constants $(a, b, c)$ belong to the region of {\em uniaxial bistability}
\begin{equation*}
\mathcal{D}:=\left\{(a, b, c)\in (0, \infty)^{3}\,:\, b^{2}> 27ac\right\}
\end{equation*}  
of parameter space $(0, \infty)^{3}$. The model for the dynamics of nematic liquid crystals we study is the $L^{2}(\mathbb{R}^{3})$-gradient flow of the functional $E_{\mathrm{LdG}}$,
\begin{equation}\label{qteneq}
\frac{\partial Q}{\partial t}=\Delta Q - a(\vartheta)\,Q+b\left(Q^{2}-\frac{1}{3}\mathrm{tr}\left(Q^{2}\right)I\right)-c\,\mathrm{tr}\left(Q^{2}\right)Q.
\end{equation}
Furthermore, we model the temperature quench which renders the the isotropic phase less energically-favourable than the nematic as {\em instantaneous}, i.e. the parameters $(a, b, c)$ are fixed and lie in the region $\mathcal{D}$ and do not depend on an evolving temperature field $\vartheta$. Before the quench is effected at time $t=0$, one thinks of the parameters $(a, b, c)$ as lying in the complementary region $(0, \infty)^{3}\setminus \mathcal{D}$: in this case, the isotropic phase $Q\equiv 0$ is a global minimiser of the bulk energy density \eqref{bulk} above. Of course, one could consider the more physically-relevant problem where $\vartheta$ evolves under an appropriate evolution equation, however we believe this to be an unnecessary complication for investigating the asymptotic self-similarity of solutions $Q$.

We work over the whole space $\mathbb{R}^{3}$ as opposed to a bounded domain $\Omega\subset\mathbb{R}^{3}$ so we need not worry about restricting the spatial argument $r$ of the correlation function \eqref{corrfun}. One may interpret this in physical terms as modelling behaviour of a liquid crystal in the bulk of the material, far from any boundaries where complicated boundary effects could come into play.

The main result of this paper is the following:
\begin{thm}\label{mainy} For any given $\delta>0$ there exist $\eta>0$, depending only on $\delta$ and the parameters $(a, b, c)\in\mathcal{D}$, and an open dense subset of
\begin{equation*}
\left\{R\in L^{\infty}(\mathbb{R}^{3}, \symnn)\,:\, \underset{x\in\mathbb{R}^{3}}{\mathrm{ess}\sup}\left(1+|x|\right)^{8+\delta}|R(x)|< \eta\right\},
\end{equation*}
such that for any Borel probability measure supported in this open dense set, the associated correlation function $c_{\mu_{0}}$ \eqref{corrfun} exhibits asymptotic self-similar behaviour, viz
\begin{equation*}
\left\|c_{\mu_{0}}(r, t)- e^{-\frac{|r|^{2}}{8t}}\right\|_{L^{\infty}(\mathbb{R}^{3}, \,dr)}=\mathcal{O}\left(t^{-\frac{1}{2}}\right) \quad \text{as} \quad t\rightarrow\infty.
\end{equation*}
\end{thm}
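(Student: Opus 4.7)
The key structural observation is that $c_{\mu_{0}}$ is a \emph{ratio} of autocorrelations: any factor common to numerator and denominator in \eqref{corrfun}---in particular the exponential temporal decay $e^{-2at}$ arising from $a>0$ (which holds in $\mathcal{D}$) and the overall amplitude of the initial datum---cancels, leaving only the spatial profile set by the heat semigroup. Concretely, for each sample path $Q(\cdot,t)$, Parseval's identity gives
\begin{equation*}
\int_{\mathbb{R}^{3}}\tr\big(Q(x+r,t)Q(x,t)\big)\,dx=(2\pi)^{-3}\int_{\mathbb{R}^{3}}e^{ir\cdot\xi}|\hat{Q}(\xi,t)|^{2}\,d\xi,
\end{equation*}
and at leading order $\hat{Q}(\xi,t)\approx e^{-(|\xi|^{2}+a)t}\hat{Q}_{0}(\xi)$. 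Rescaling $\xi=\eta/\sqrt{t}$, completing the square against $e^{ir\cdot\xi}$, and evaluating the resulting Gaussian integral produces a prefactor $(\pi/2t)^{3/2}e^{-|r|^{2}/(8t)}$ in the numerator; dividing by the same expression at $r=0$ delivers the target asymptotic $e^{-|r|^{2}/(8t)}$.

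\textbf{Implementation.} First I would establish global well-posedness of \eqref{qteneq} for initial data in the weighted ball $W_{\delta,\eta}:=\{R\in L^{\infty}(\mathbb{R}^{3},\symnn):\mathrm{ess\,sup}\,(1+|x|)^{8+\delta}|R(x)|<\eta\}$ by a Duhamel contraction in a time-weighted $L^{\infty}$ space based on the damped heat semigroup $e^{-at}e^{t\Delta}$; the quadratic/cubic nature of the nonlinearity together with smallness of $\eta$ ensures that the nonlinear Duhamel contribution is of smaller order than the linear flow, and that the pointwise decay $(1+|x|)^{-8-\delta}$ is propagated in time. Second, I would extract a refined long-time asymptotic profile
\begin{equation*}
Q(x,t)=e^{-at}G_{t}(x)\,M_{\infty}(Q_{0})+\mathcal{R}(x,t),\qquad M_{\infty}(Q_{0})\in\symnn,
\end{equation*}
where $G_{t}$ is the $3$-dimensional heat kernel and $M_{\infty}$ is a $\symnn$-valued functional of $Q_{0}$ whose leading term is $\int_{\mathbb{R}^{3}}Q_{0}(x)\,dx$ (plus an absolutely convergent series of corrections sourced by the nonlinear Duhamel terms), and where $\mathcal{R}$ enjoys a pointwise bound strictly smaller by a factor of $t^{-1/2}$; the strong decay $(1+|x|)^{-8-\delta}$ is what guarantees enough regularity of $\hat{Q}_{0}$ at the origin to make the first-order Taylor expansion of $|\hat{Q}_{0}(\eta/\sqrt{t})|^{2}$ rigorous and uniform in $\eta$. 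Third, substituting this profile into the autocorrelation, using the semigroup identity $G_{t}*G_{t}=G_{2t}$, and dividing, yields for each trajectory
\begin{equation*}
\frac{\int_{\mathbb{R}^{3}}\tr\big(Q(x+r,t)Q(x,t)\big)\,dx}{\int_{\mathbb{R}^{3}}\tr\big(Q(x,t)^{2}\big)\,dx}=e^{-|r|^{2}/(8t)}+O(t^{-1/2})
\end{equation*}
uniformly in $r\in\mathbb{R}^{3}$, on the proviso that $\tr(M_{\infty}(Q_{0})^{2})>0$.

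\textbf{Statistical averaging and the open dense set.} Since $\mu_{t}=S(t)_{*}\mu_{0}$ by the construction in Section \ref{statisticalsimilarity}, both numerator and denominator of \eqref{corrfun} reduce by Fubini to $\mu_{0}$-averages of their trajectory-wise counterparts; uniformity of the $O(t^{-1/2})$ remainder over $W_{\delta,\eta}$, baked into the fixed-point construction, permits the asymptotic to be pulled through $\int_{H}d\mu_{0}(Q_{0})$. The non-degeneracy condition $\tr(M_{\infty}(Q_{0})^{2})>0$ defines an open subset of $W_{\delta,\eta}$; its density follows from continuity of the map $Q_{0}\mapsto M_{\infty}(Q_{0})$ together with the fact that its differential at $0$ is, to leading order, $Q_{0}\mapsto \int Q_{0}$, so any prescribed $\symnn$-direction can be realised by a small compactly supported perturbation within $W_{\delta,\eta}$. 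Any Borel probability measure supported in this open dense complement yields the conclusion of the theorem.

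\textbf{Principal obstacle.} The hard part is the refined asymptotic profile for $Q$ with the sharp $O(t^{-1/2})$ relative correction: one needs a careful bootstrap combining weighted $L^{\infty}$ estimates for the damped heat semigroup with the quadratic/cubic source in Duhamel form---in the spirit of long-time asymptotic analysis for semilinear parabolic equations à la Escobedo--Kavian or Gallay--Wayne---and the subtlety is that the spatial moments of $\mathcal{N}(Q(\cdot,s))$ must be controlled uniformly in $s$ so as not to spoil the Gaussian leading term in $M_{\infty}$. A secondary, non-trivial, point is the passage from a pointwise estimate on $N(r,t)$ to control in $L^{\infty}(\mathbb{R}^{3},dr)$, which amounts to showing that $|\hat{Q}(\xi,t)|^{2}$ remains sufficiently smooth in $\xi$ to force Gaussian decay of its inverse Fourier transform for $|r|/\sqrt{t}$ large; again this is precisely what the $(1+|x|)^{-8-\delta}$ weight in the hypothesis is designed to deliver.
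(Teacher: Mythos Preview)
Your overall architecture matches the paper's: extract a heat-kernel leading term $Q(x,t)\approx A\,e^{-at}\Phi_{1}(x,t)$ via a contraction in a weighted space, feed this into the autocorrelation (using $\Phi_{1}\ast\Phi_{1}\propto e^{-|r|^{2}/8(t+1)}$), divide, and then identify the open dense set as $\{A\neq 0\}$. The refined remainder estimate you flag as the ``principal obstacle'' is exactly what the paper spends most of its effort on, and it does so in physical space rather than via Fourier: the key technical device is a pointwise estimate (Proposition~\ref{maeeresult}) quantifying how much faster $e^{t\Delta}u_{0}-\Phi(\cdot,t)\int u_{0}$ decays when $u_{0}$ has a weighted moment, which is then fed into a bespoke fixed-point scheme on $X=\symnn\times X_{0}$ where the matrix $A$ is built in as one of the unknowns (equations \eqref{fonecomponent}--\eqref{ftwocomponent}). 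Your proposed route through Fourier regularity of $\widehat{Q}_{0}$ at the origin and Escobedo--Kavian/Gallay--Wayne machinery would reach the same conclusion and is a legitimate alternative, though you would still need a substitute for Proposition~\ref{maeeresult} to control the Duhamel tail pointwise in the weighted norm.

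The one place where your argument is genuinely different---and arguably more economical---is the density of $\{A\neq 0\}$. The paper proves that the map $Q_{0}\mapsto A$ is \emph{real-analytic} on $\mathcal{A}^{\circ}$ (via the analytic implicit function theorem applied to the Duhamel map $\mathcal{F}_{0}$), exhibits an explicit one-parameter uniaxial family on which $A\neq 0$, and then invokes the fact that a non-constant analytic map cannot vanish on an open set. Your proposal instead observes that $dM_{\infty}[0]=\int(\cdot)\,dx$ is surjective onto $\symnn$, whence by continuity of the Fr\'echet derivative the map remains a submersion throughout the small ball, and the zero set is a closed finite-codimension set with empty interior. This is correct and shorter; the analyticity route buys robustness (it would survive even if surjectivity of the derivative failed somewhere), but for the present small-data regime your perturbation argument suffices.
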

In the final section of this article, we explore an alternative scaling regime for the correlation function $c_{\mu_{0}}(\cdot, t)$ as $t\rightarrow \infty$, when $\mu_{0}$ is supported on `large' initial data in $H$.
\begin{rem}
The structure of the open dense subset
\begin{equation*}
\left\{R\in L^{\infty}(\mathbb{R}^{3}, \symnn)\,:\, \underset{x\in\mathbb{R}^{3}}{\mathrm{ess}\sup}\left(1+|x|\right)^{8+\delta}|R(x)|< \eta\right\},
\end{equation*}
to which we refer in the statement of theorem \ref{mainy} is described in detail in section \ref{amatrix}.
\end{rem}
\subsection{Structure of the Paper} In section \ref{sectiontwo}, we record some preliminary results which allow us to establish a representation formula for `small initial data' solutions of the Q-tensor equation in section \ref{sectionthree}. The main result of the paper lies in section \ref{sectionfour}, in which we establish asymptotic self-similarity of the correlation function $c_{\mu_{0}}$ by means of the previously established representation formula. In the final section, we briefly discuss scaling behaviour of the correlation function corresponding to a class of solutions starting from `large' initial data, which gives rise to a different scaling regime for the correlation function, namely $L(t)=t$.
\subsection{Notation}
For any matrix $A\in\symnn$ we denote its Frobenius norm by $|A|:=\sqrt{A:A}$, where $A:B\equiv A_{ij}B_{ji}$ for $A, B\in \symnn$. In what follows, unless otherwise stated, all Lebesgue spaces have range in $\symnn$ and we write $L^{p}(\mathbb{R}^{3}, \,\symnn)$ simply as $L^{p}(\mathbb{R}^{3})$ for $1\leq p\leq\infty$. We also work with the phase space $H:=L^{2}(\mathbb{R}^{3})\cap L^{6}(\mathbb{R}^{3})$ endowed with the norm $\|Q\|:=\max\{\|Q\|_{2}, \|Q\|_{6}\}$, where the norms $\|\cdot\|_{p}$ are given by
\begin{displaymath}
\|Q\|_{p}:=\left\{
\begin{array}{ll}
\displaystyle \left(\int_{\mathbb{R}^{3}}\left(\mathrm{tr}\left[Q(x)^{2}\right]\right)^{\frac{p}{2}}\,dx\right)^{\frac{1}{p}} & \quad \text{when}\quad 1\leq p<\infty, \vspace{2mm} \\ \displaystyle \underset{x\in\mathbb{R}^{3}}{\mathrm{ess}\sup}\,|Q(x)| & \quad \text{when} \quad p=\infty.
\end{array}
\right.
\end{displaymath}
We denote the fundamental solution of the heat equation on $\mathbb{R}^{3}$ by $\Phi$, namely
\begin{equation*}
\Phi(x, t):=\frac{e^{-|x|^{2}/4t}}{(4\pi t)^{\frac{3}{2}}},
\end{equation*}
and denote the time-shifted profile $\Phi(x, t+1)$ simply by $\Phi_{1}(x, t)$. Finally, for any given $\delta>0$ we denote by $\mathcal{A}$ the space of essentially bounded functions given by
\begin{equation}\label{afunctions}
\mathcal{A}:=\left\{Q\in L^{\infty}(\mathbb{R}^{3})\,:\,\underset{x\in\mathbb{R}^{3}}{\mathrm{ess}\sup}\,\left(1+|x|\right)^{8+\delta}|Q(x)|<\infty\right\},
\end{equation}
and equip it with the natural norm $\|Q\|_{\mathcal{A}}:=\mathrm{ess}\sup_{x\in\mathbb{R}^{3}}(1+|x|)^{8+\delta}|Q(x)|$. The constant $\delta>0$ is chosen arbitrarily but remains \emph{fixed}, and we subsequently suppress the dependence of the space $\mathcal{A}$ on $\delta$.

\section{Some Preliminary Results}\label{sectiontwo}
We now establish some results which enable us to comment on the long-time behaviour of the correlation function concentrated on individual solutions of the Q-tensor equation, namely the asymptotic behaviour as $t\rightarrow\infty$ of the quantity
\begin{equation*}
c(r, t)=\frac{\displaystyle\int_{\mathbb{R}^{3}}\mathrm{tr}\left(Q(x, t)Q(x+r, t)\right)\,dx}{\displaystyle\int_{\mathbb{R}^{3}}\mathrm{tr}\left(Q(x, t)^{2}\right)\,dx},
\end{equation*}
where the initial datum $Q_{0}\in H$ is chosen appropriately. We begin by stating a simple result on the well-posedness of the Q-tensor equation in the phase space $H$. 
\begin{prop}\label{globexpro} 
For each $Q_{0}\in H$ there exists a unique global classically-smooth solution $Q$ of the Q-tensor equation \eqref{qteneq} on $\mathbb{R}^{3}\times (0, \infty)$ satisfying $\|Q(\cdot, t)\|<\infty$ for all $t>0$. Furthermore, the nonlinear semigroup of solution operators $S(t): H\rightarrow H$ defined by 
\begin{displaymath}
S(t)Q_{0}:=\left\{
\begin{array}{ll}
Q_{0}, & \quad \text{if} \quad t=0 \\ & \\ Q(\cdot, t; Q_{0}) & \quad \text{if} \quad t>0 
\end{array}
\right.
\end{displaymath}
for any $Q_{0}\in H$ is strongly continuous.
\end{prop}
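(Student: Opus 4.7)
My plan is to prove Proposition~\ref{globexpro} via the standard semilinear parabolic scheme: local existence by a Banach fixed point in mild (Duhamel) form, globalisation via an $L^{\infty}$ maximum principle combined with Grönwall estimates in the mild formulation, classical regularity by parabolic bootstrap, and strong continuity of $S(t)$ by a direct perturbation estimate.

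For local existence I would write \eqref{qteneq} as the integral equation
\begin{equation*}
Q(t) = e^{t\Delta}Q_{0} + \int_{0}^{t} e^{(t-s)\Delta}N(Q(s))\,ds, \qquad N(Q) := -aQ + b\bigl(Q^{2}-\tfrac{1}{3}\mathrm{tr}(Q^{2})I\bigr) - c\,\mathrm{tr}(Q^{2})Q,
\end{equation*}
and seek a fixed point of the right-hand side on a closed ball in $C([0,T];H)$. Using the Hölder bounds $\|\mathrm{tr}(Q^{2})Q\|_{2}\leq \|Q\|_{6}^{3}$, $\|Q^{2}\|_{2}\leq\|Q\|_{2}\|Q\|_{\infty}$ (with a further interpolation $\|Q\|_{\infty}\lesssim\|Q\|_{6}^{1/2}\|\nabla Q\|_{6}^{1/2}$ not being needed at this stage since we only need control in $L^{2}\cap L^{6}$), together with the standard $L^{p}$-$L^{q}$ smoothing of $e^{t\Delta}$, the right-hand side is a contraction for $T=T(\|Q_{0}\|)$ sufficiently small, yielding a unique maximal mild solution. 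Uniqueness on the maximal interval of existence follows from a Grönwall estimate applied to the difference of two mild solutions in $H$.

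To rule out finite-time blow-up, I would derive an $L^{\infty}$ bound for $\varphi(x,t):=\mathrm{tr}(Q(x,t)^{2})$. Differentiating and using the equation gives
\begin{equation*}
(\partial_{t}-\Delta)\varphi = -2|\nabla Q|^{2} - 2a\varphi + 2b\,\mathrm{tr}(Q^{3}) - 2c\,\varphi^{2},
\end{equation*}
and since $|\mathrm{tr}(Q^{3})|\leq C\varphi^{3/2}$, the cubic source is dominated by the quartic dissipation $-2c\varphi^{2}$ for $\varphi$ large. A comparison principle applied to the associated ODE $\dot{y}=-2cy^{2}+Cby^{3/2}$ yields $\|Q(\cdot,t)\|_{\infty}\leq M=M(\|Q_{0}\|_{\infty},a,b,c)$ as soon as $Q(\cdot,t)\in L^{\infty}$; the latter is granted instantaneously for $t>0$ by the $L^{6}\to L^{\infty}$ smoothing of $e^{t\Delta}$ applied to the mild formula. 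Feeding this pointwise bound back into the mild formulation and invoking Grönwall propagates both $\|Q(\cdot,t)\|_{2}$ and $\|Q(\cdot,t)\|_{6}$ on every finite interval, giving a global solution in $H$. Classical regularity follows from standard parabolic Schauder theory applied to a semilinear heat equation with smooth polynomial nonlinearity, and strong continuity of $S(t)$ in both arguments reduces to the strong continuity of $e^{t\Delta}$ on each $L^{p}$ factor of $H$ combined with the local Lipschitz property of $N$ on bounded subsets of $H$.

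The main obstacle I anticipate is that the indefinite-sign quadratic term $b(Q^{2}-\tfrac{1}{3}\mathrm{tr}(Q^{2})I)$ admits no immediate $L^{p}$-monotonicity estimate, so the $L^{2}$ and $L^{6}$ components of the $H$-norm cannot be bounded by testing the PDE against $|Q|^{p-2}Q$ directly; instead one must carry out the mild-formulation Grönwall argument carefully, tracking constants depending only on the a priori $L^{\infty}$ bound $M$. A secondary subtlety is the maximum principle on the unbounded domain $\mathbb{R}^{3}$, where the comparison argument has to be localised (say via a standard cut-off) before taking limits, using that the $L^{\infty}$ norm of $e^{t\Delta}Q_{0}$ is already finite for each $t>0$.
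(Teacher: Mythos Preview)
Your outline is correct and follows exactly the standard semilinear parabolic scheme the paper has in mind: the paper's own proof of this proposition consists of the single sentence ``The proof follows by standard techniques'' together with a reference to a thesis, so there is nothing to compare beyond noting that your sketch supplies precisely the details the authors chose to omit. The only point worth tightening is your $L^{\infty}$ step: since $Q_{0}\in H=L^{2}\cap L^{6}$ need not lie in $L^{\infty}$, the comparison argument for $\varphi=\mathrm{tr}(Q^{2})$ must be initiated at some small positive time $t_{0}>0$ after the smoothing $e^{t\Delta}$ has placed the mild solution in $L^{\infty}$, and the resulting bound $M$ then depends on $\|Q(\cdot,t_{0})\|_{\infty}$ rather than on $\|Q_{0}\|_{\infty}$; you already flag this, so the argument goes through.
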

\begin{proof}
The proof follows by standard techniques. We refer the reader to \textsc{Wilkinson} \cite{mythesis} for details.
\end{proof}
It is our aim to investigate finer properties of `small initial data' solutions of \eqref{qteneq}, namely we wish to demonstrate a decomposition result for solutions starting from initial data which are small in the $\|\cdot\|_{\mathcal{A}}$-norm. In order to achieve this, we first must examine the long-time behaviour of solutions of the heat equation in $L^{p}(\mathbb{R}^{3})$.

\subsection{A Remark on the Heat Equation in $\mathbb{R}^{d}$}
Consider the heat equation
\begin{equation*}
\frac{\partial u}{\partial t}(x, t)=\Delta u(x, t) \qquad (x, t)\in\mathbb{R}^{d}\times (0,\infty),
\end{equation*}
where $1\leq p\leq \infty$, $d\geq 1$ is an integer. For given $L^{q}(\mathbb{R}^{d})$ initial data, it possesses a classically smooth solution on $\mathbb{R}^{d}\times(0, \infty)$ and this solution $u(x, t)$ may be expressed as a convolution involving the $d$-dimensional heat kernel $\Phi_{d}(x, t)$, 
\begin{equation*}
u(x, t)=(\Phi_{d}(\cdot, t)\ast u_{0})(x)=\int_{\mathbb{R}^{d}}\frac{e^{-|x-y|^{2}/4t}}{(4\pi t)^{d/2}}u_{0}(y)\,dy,
\end{equation*}
for $t>0$. In order to obtain an estimate on the time decay of solutions in $L^{p}(\mathbb{R}^{d})$, one can employ Young's inequality in the above to produce
\begin{equation*}
\|u(\cdot, t)\|_{p}\leq \frac{C(r)}{t^{\frac{d}{2}\left(1-\frac{1}{r}\right)}}\|u_{0}\|_{q},
\end{equation*}
for all $t>0$ and $1+1/p=1/q+1/r$. Thus, all solutions of the heat equation with $L^{q}(\mathbb{R}^{d})$ initial data decay to zero \emph{at least} with rate $t^{-d/2+d/2r}$ as $t\rightarrow\infty$. We show in what is to come that if we insist the initial data of the heat equation are of \emph{zero mean} i.e. $\int_{\mathbb{R}^{d}}u_{0}=0$ and possess some mild decay condition at infinity, then we may improve our estimate on the rate of decay of the corresponding solution to 0 in $L^{p}(\mathbb{R}^{d})$ as $t\rightarrow\infty$.

We begin with an auxiliary pointwise estimate on solutions of the heat equation that will be of help to us in the following section, and may indeed be of independent interest. We consider only the case of the heat equation in $\mathbb{R}^{3}$ appropriate to our considerations. The methods used here may be extended without great effort to capture a similar result in $\mathbb{R}^{d}$ for arbitrary dimension $d\geq 1$.
\begin{prop}\label{maeeresult}
Suppose $\beta>0$, and consider an integrable map $u_{0}:\mathbb{R}^{3}\rightarrow\mathbb{R}$ satisfying
\begin{equation*}
\int_{\mathbb{R}^{3}}\left(1+|x|\right)^{\beta+1}|u_{0}(x)|\,dx<\infty.
\end{equation*}
The difference
$$\overline{m}(x, t):=(e^{t\Delta}u_{0})(x)-\Phi(x, t)\int_{\mathbb{R}^{3}}u_{0}(y)\,dy$$
satisfies the estimate
\begin{equation}\label{myresult1}
|\overline{m}(x, t)|\leq Ct^{-2}\left(1+\frac{|x|}{\sqrt{8t}}\right)^{-\beta}\int_{\mathbb{R}^{3}}|y|\left(1+\frac{|y|}{\sqrt{8t}}\right)^{\beta}|u_{0}(y)|\,dy
\end{equation}
for $x\in\mathbb{R}^{3}$, $t>0$. The explicitly-computable constant $C>0$ depends only on $\beta$.
\end{prop}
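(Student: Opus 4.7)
My plan is to exploit the built-in cancellation present in $\overline{m}$. Since $\int_{\mathbb{R}^{3}} \Phi(x-y, t)\,dy = 1$, one may write
\begin{equation*}
\overline{m}(x, t) = \int_{\mathbb{R}^{3}} \bigl[\Phi(x-y, t) - \Phi(x, t)\bigr]\,u_{0}(y)\,dy,
\end{equation*}
and apply the fundamental theorem of calculus in $y$ to obtain the identity
\begin{equation*}
\Phi(x-y, t) - \Phi(x, t) = -\int_{0}^{1} y \cdot \nabla_{x}\Phi(x-sy, t)\,ds.
\end{equation*}
This reduces the problem to a favourable pointwise bound on $|\nabla_{x}\Phi(x-sy, t)|$. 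Using the identity $\nabla_{x}\Phi(x,t) = -\frac{x}{2t}\Phi(x,t)$ together with the fact that $a \mapsto a e^{-a^{2}/2}$ is bounded on $[0, \infty)$, I would deduce
\begin{equation*}
|\nabla_{x}\Phi(x-sy, t)| \leq \frac{C}{t^{2}}\,e^{-|x-sy|^{2}/8t},
\end{equation*}
which already accounts for the $t^{-2}$ prefactor appearing in the target estimate.

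The main technical step, which I expect to be the crux of the argument, is a pointwise comparison between the Gaussian factor above and the polynomial weights appearing in the proposition. Specifically, I would establish that, uniformly in $s \in [0, 1]$,
\begin{equation*}
e^{-|x-sy|^{2}/8t} \leq C(\beta)\left(1 + \frac{|x|}{\sqrt{8t}}\right)^{-\beta}\left(1 + \frac{|y|}{\sqrt{8t}}\right)^{\beta},
\end{equation*}
via a dichotomy on the size of $|x-sy|$ relative to $|x|$. If $|x-sy| \geq |x|/2$, then $e^{-|x-sy|^{2}/8t} \leq e^{-|x|^{2}/32t}$, and this dominates $(1+|x|/\sqrt{8t})^{-\beta}$ up to a constant depending only on $\beta$; the factor $(1+|y|/\sqrt{8t})^{\beta} \geq 1$ is harmless. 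If instead $|x-sy| < |x|/2$, the triangle inequality $|x| \leq |x-sy| + s|y|$ forces $s|y| > |x|/2$, and hence $1 + |y|/\sqrt{8t} \geq (1/2)(1 + |x|/\sqrt{8t})$; the polynomial ratio on the right is then bounded below by a constant depending only on $\beta$, and the crude estimate $e^{-|x-sy|^{2}/8t} \leq 1$ closes the argument.

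Combining these three ingredients via Fubini, pulling $(1+|x|/\sqrt{8t})^{-\beta}$ (which depends on neither $s$ nor $y$) outside the integral, and performing the trivial $s$-integration over $[0, 1]$ recovers precisely the stated estimate. The moment hypothesis $\int_{\mathbb{R}^{3}}(1+|x|)^{\beta+1}|u_{0}(x)|\,dx < \infty$ guarantees finiteness of the right-hand side for each fixed $t > 0$. The constant $C$ which emerges from assembling these pieces is explicitly computable in terms of $\beta$, as claimed.
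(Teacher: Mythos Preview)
Your argument is correct and is genuinely different from the paper's route, though both exploit the same underlying cancellation in $\overline{m}$ to gain an extra factor of $|y|$ and an extra $t^{-1/2}$.

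The paper passes to spherical coordinates, introduces the tail integral $U(r,\vartheta,\varphi)=\int_r^\infty u_0(\rho\sigma)\rho^2\sin\varphi\,d\rho$, and integrates by parts in the radial variable. This produces the bound
\begin{equation*}
|\overline{m}(x,t)|\leq Ct^{-2}\int_{\mathbb{R}^{3}}\left(\int_{0}^{|y|}e^{-(|x|-r)^{2}/8t}\,dr\right)|u_{0}(y)|\,dy,
\end{equation*}
and the heart of the proof is then a separate one-dimensional lemma,
\begin{equation*}
\int_{X-Y}^{X}e^{-\xi^{2}}\,d\xi\leq C(\beta)\,Y\left(\frac{1+Y}{1+X}\right)^{\beta},
\end{equation*}
applied with $X=|x|/\sqrt{8t}$ and $Y=|y|/\sqrt{8t}$. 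Your approach replaces all of this by the mean-value identity $\Phi(x-y,t)-\Phi(x,t)=-\int_0^1 y\cdot\nabla_x\Phi(x-sy,t)\,ds$ and a direct pointwise weight comparison $e^{-|x-sy|^2/8t}\leq C(\beta)(1+|x|/\sqrt{8t})^{-\beta}(1+|y|/\sqrt{8t})^{\beta}$, argued via the same kind of near/far dichotomy the paper uses elsewhere. Your version is shorter, stays in Cartesian coordinates, and avoids the auxiliary lemma; the paper's version isolates the one-dimensional Gaussian integral estimate as a stand-alone result, which is perhaps more reusable. Either way the constant is explicit in $\beta$, and the moment hypothesis ensures the right-hand side is finite for each $t>0$.
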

\begin{proof} Define the map $\sigma: [0, 2\pi)\times [0, \pi)\rightarrow\mathbb{S}^{2}$, a standard surface patch parameterisation on the unit sphere, to be 
\begin{equation*}
\sigma(\vartheta, \varphi):=\left(\cos{\vartheta}\sin{\varphi}, \sin{\vartheta}\sin{\varphi}, \cos{\varphi}\right),
\end{equation*}
and define $U:[0,\infty)\times [0,2\pi)\times [0, \pi)\rightarrow\mathbb{R}$ by
\begin{equation*}
U(r, \vartheta, \varphi):=\int_{r}^{\infty}u_{0}(\rho\cos\vartheta \sin\varphi, \rho\sin\vartheta \sin\varphi, \rho\cos\varphi)\rho^{2}\sin\varphi\,d\rho.
\end{equation*}
Now, by a change of co-ordinates and an application of integration by parts, we deduce that
\begin{align}
& \quad (e^{t\Delta}u_{0})(x) \notag\\ = & \quad \int_{0}^{\pi}\int_{0}^{2\pi}\int_{0}^{\infty}\frac{e^{-|x-r\sigma(\vartheta, \varphi)|^{2}/4t}}{(4\pi t)^{3/2}}u_{0}(r\sigma(\vartheta, \varphi))r^{2}\sin\varphi\,drd\vartheta d\varphi \notag\\ = & \quad \int_{\mathbb{R}^{3}} \Phi(x-r\sigma, t)\frac{\partial}{\partial r}\left(-U(r, \vartheta, \varphi)\right)\,drd\vartheta d\varphi \notag\\ = & \quad \Phi\int_{0}^{\pi}\int_{0}^{2\pi}U(0, \vartheta, \varphi)\, d\vartheta d\varphi +  \int_{\mathbb{R}^{3}}\frac{\sigma\cdot(x-r\sigma)}{2t}\Phi(x-r\sigma, t)U(r, \vartheta, \varphi)\,drd\vartheta d\varphi \notag\\ = & \quad \Phi\int_{\mathbb{R}^{3}}u_{0}(y)\,dy + \int_{\mathbb{R}^{3}}\frac{\sigma\cdot(x-r\sigma)}{2t}\Phi(x-r\sigma, t)U(r, \vartheta, \varphi)\,drd\vartheta d\varphi.\notag
\end{align}
Applying the elementary inequality $\sqrt{z}\exp(-z)\leq\exp(-z/2)$ for $z\geq 0$, we deduce from the above
\begin{align}
& \quad (e^{t\Delta}u_{0})(x)-\Phi(x, t)\int_{\mathbb{R}^{3}}u_{0}(y)\,dy \notag\\ \leq & \quad \frac{1}{(4\pi)^{3/2}}\frac{1}{t^{2}}\int_{\mathbb{R}^{3}} e^{-|x-r\sigma|^{2}/8t}\Bigg(\int_{r}^{\infty}|u_{0}(\rho\sigma)|\rho^{2}d\rho\Bigg)\,\sin\varphi \,drd\vartheta d\varphi \notag\\ \leq & \quad Ct^{-2}\int_{\mathbb{R}^{3}}\sup_{(\vartheta, \varphi)}\left(e^{-|x-r\sigma|^{2}/8t}\right)\Bigg(\int_{r}^{\infty}|u_{0}(\rho\sigma)|\rho^{2}d\rho\Bigg)\,\sin\varphi \,drd\vartheta d\varphi  \notag\\ = &  \quad Ct^{-2}\int_{\mathbb{R}^{3}} e^{-(|x|-r)^{2}/8t}\Bigg(\int_{r}^{\infty}|u_{0}(\rho\sigma)|\rho^{2}d\rho\Bigg)\,\sin\varphi \,drd\vartheta d\varphi, \notag
\end{align}
where $C>0$ is independent of $t>0$. A couple of applications of Fubini's theorem then yield
\begin{eqnarray}
\overline{m}(x, t) & \leq & Ct^{-2}\int_{0}^{\infty}e^{-(|x|-r)^{2}/8t}\int_{|y|\geq r}|u_{0}(y)|\,dy\,dr \notag\\ & = & Ct^{-2}\int_{\mathbb{R}^{3}}\left(\int_{0}^{\infty}(1-\chi_{B(0, r)}(y))e^{-(|x|-r)^{2}/8t}\,dr\right)|u_{0}(y)|\,dy, \notag
\end{eqnarray}
where $\chi_{B(0, r)}$ denotes the characteristic function of the open ball $B(0, r)\subset\mathbb{R}^{3}$. From this we find
\begin{equation}\label{returntome}
\overline{m}(x, t)\leq Ct^{-2}\int_{\mathbb{R}^{3}}\left(\int_{0}^{|y|}e^{-(|x|-r)^{2}/8t}\,dr\right)|u_{0}(y)|\,dy.
\end{equation}
We pause at this point to obtain a helpful estimate on the $y$-integrand in the above.
\begin{lem}
For any $\beta>0$, there exists a constant $C=C(\beta)>0$ such that
\begin{equation}\label{mineineq}
\int_{X-Y}^{X}e^{-\xi^{2}}d\xi\leq CY\left(\frac{1+Y}{1+X}\right)^{\beta} \quad \text{for}\hspace{2mm}X, Y\geq 0.
\end{equation}
\end{lem}
\begin{proof} In what follows we fix $Y\geq 0$, considering it as a parameter. We separate the demonstration of the inequality into two cases.
\newline\noindent\textit{(i) The case where $0\leq X\leq Y$:} One may easily deduce from the chain $0\leq X\leq Y$ the simple inequality
\begin{equation*}
Y\leq Y\left(\frac{1+Y}{1+X}\right)^{\beta},
\end{equation*}
for any $\beta>0$. Using this, we have
\begin{equation*}
\int_{X-Y}^{X}e^{-\xi^{2}}\,d\xi\leq\int_{X-Y}^{X}\max_{\xi_{0}}\left(e^{-\xi_{0}^{2}}\right)\,d\xi=Y\leq Y\left(\frac{1+Y}{1+X}\right)^{\beta}.
\end{equation*}
Thus, the claimed inequality holds for $0\leq X\leq Y$.
\newline\noindent\textit{(ii) The case where $Y\leq X$:} This time, we have the estimate
\begin{equation*}
\int_{X-Y}^{X}e^{-\xi^{2}}\,d\xi\leq\int_{X-Y}^{X}\sup_{\xi_{0}}\left(e^{-\xi_{0}^{2}}\right)\,d\xi=Ye^{-(X-Y)^{2}}.
\end{equation*}
Now, since there exists a constant $C>1$ dependent only on $\beta>0$ such that
\begin{equation*}
e^{-Z^{2}}\leq C\left(\frac{1}{1+Z}\right)^{\beta},
\end{equation*}
for all $Z\geq 0$, and noting that
\begin{equation*}
\left(\frac{1}{1+Z}\right)^{\beta}\leq \left(\frac{1+Y}{1+Y+Z}\right)^{\beta},
\end{equation*}
for all $Y\geq 0$, it follows from the substitution $Z\mapsto X-Y$ that
\begin{equation*}
e^{-(X-Y)^{2}}\leq C\left(\frac{1+Y}{1+X}\right)^{\beta}.
\end{equation*} 
for all $X\geq Y$. This estimate together with the first case estimate yields the result \eqref{mineineq} above. 
\end{proof}
We now return to the estimate \eqref{returntome} and close the proof of the proposition. Transforming the $y$-integrand through a simple change of variables provides
\begin{eqnarray}\label{final}
\overline{m}(x, t) & \leq & Ct^{-2}\int_{\mathbb{R}^{3}}\left(\int_{0}^{|y|}e^{-(|x|-r)^{2}/8t}\,dr\right)|u_{0}(y)|\,dy \notag \\ & = & Ct^{-2}\int_{\mathbb{R}^{3}}\left(\sqrt{8t}\int_{\frac{|x|}{\sqrt{8t}}-\frac{|y|}{\sqrt{8t}}}^{\frac{|x|}{\sqrt{8t}}}e^{-\xi^{2}}\,d\xi\right)|u_{0}(y)|\,dy.
\end{eqnarray} 
Finally, setting $X:=|x|/(8t)^{1/2}$ and $Y:=|y|/(8t)^{1/2}$, we apply the above lemma to \eqref{final}, from which estimate \eqref{myresult1} follows. 
\end{proof}
\begin{rem}
Raising both sides of \eqref{myresult1} to the power $p>1$ and integrating over the domain $\mathbb{R}^{3}$, one has that
\begin{equation*}
\left\|e^{t\Delta}u_{0}-\Phi(\cdot, t)\int_{\mathbb{R}^{3}}u_{0}(y)\,dy\right\|_{p}\leq \frac{C(p, \beta)}{t^{2-\frac{3}{2p}}}\int_{\mathbb{R}^{3}}|u_{0}(y)|(1+|y|)^{\beta+1}\,dy,
\end{equation*}
for $t>0$. Therefore, if the initial data of the heat equation are of mean zero and lie in $L^{1}(\mathbb{R}^{3}; d\omega_{\beta})$, where $d\omega_{\beta}:=(1+|x|)^{\beta+1}dx$, we have improved upon the standard estimate obtained by Young's inequality for the time decay of solutions in $L^{p}(\mathbb{R}^{3})$. Estimates of type \eqref{myresult1} are key to obtaining a representation formula in the following section for solutions of \eqref{qteneq}. Let us also remark that the above result leads to a generalisation to higher spatial dimension of the work of \textsc{Taskinen} {\em \cite{Task1}}.
\end{rem}

\section{A Representation Formula}\label{sectionthree}
We establish here a helpful structural formula for solutions of the gradient flow starting from `small' initial data. In particular, we show that the leading order asymptotics (up to order $t^{-3/2}$ in time after rescaling) of such solutions in all $L^{p}$-norms are governed by a matrix multiple of the heat kernel. It proves important in our subsequent analysis to consider a transformed version of the gradient flow \eqref{qteneq}. If we set $R(x, t):=e^{at}Q(x, t)$ and substitute $R$ into \eqref{qteneq} in a formal manner, we obtain the equation
\begin{equation}\label{req}
\partial_{t}R=\Delta R+b\,e^{-at}\left(R^{2}-\frac{1}{3}\mathrm{tr}\left(R^{2}\right)\right)-c\,e^{-2at}\mathrm{tr}\left(R^{2}\right)R,
\end{equation}
which is the principle equation of analysis in the section to come. 

Let $\delta>0$ be arbitrarily chosen but {\em fixed}. We recall from \eqref{afunctions} that the space $\mathcal{A}$ is defined to be 
\begin{equation*}
\left\{Q\in L^{\infty}(\mathbb{R}^{3})\,:\, \underset{x\in\mathbb{R}^{3}}{\mathrm{ess \,sup}}\left(1+|x|\right)^{8+\delta}|Q(x)|<\infty\right\}.
\end{equation*}
Let $X_{0}$ denote the set of maps 
\begin{equation*}
X_{0}:=\left\{W\in C(\mathbb{R}^{3}\times [0, \infty); \,\symnn)\, \left|\, \sup_{(x, t)} \,\omega(x, t)|W(x, t)|<\infty\right\}\right.,
\end{equation*}
where the space-time weight $\omega$ is defined to be
\begin{equation*}
\omega(x, t):=\left(1+\frac{|x|}{\sqrt{t+1}}\right)^{4+\frac{\delta}{2}}(t+1)^{2}.
\end{equation*}
When endowed with the natural norm
\begin{equation*}
\|W\|_{X_{0}}:=\sup_{(x, t)} \,\omega(x, t)|W(x, t)|,
\end{equation*}
one may verify that $X_{0}$ admits the structure of a Banach space. We also employ the product space $X:=\symnn\times X_{0}$ furnished with the norm $\|\cdot\|_{X}:=|\cdot| + \|\cdot\|_{X_{0}}$.
\begin{rem}
The space of maps $X_{0}$ with which we work from now on might seem at first glance rather ungainly. However, this space is only auxiliary in the sense that it does not feature in the hypotheses of theorem \ref{thm1} below. In our setting, it is essentially the `right' space in which to capture the first correction to the leading heat kernel asymptotics of small initial datum solutions of the transformed equation \eqref{req}.
\end{rem}

The main result of this section is contained in the following theorem.
\begin{thm}\label{thm1}
There exists $\eta>0$ depending only on $(a, b, c)\in\mathcal{D}$ such that for initial data $Q_{0}\in\mathcal{A}\subset H$ satisfying the smallness condition
\begin{equation}\label{smallness}
\|Q_{0}\|_{\mathcal{A}}\leq \eta,
\end{equation} 
the Q-tensor equation \eqref{qteneq} has a unique smooth solution on $\mathbb{R}^{3}\times (0, \infty)$ of the form
\begin{equation}\label{decomposition}
Q(x, t)=Ae^{-at}\Phi_{1}(x, t)+e^{-at}V(x, t),
\end{equation}
where $A\in\symnn$ and $V\in X_{0}$.
\end{thm}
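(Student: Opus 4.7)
The plan is to transform the equation to eliminate linear decay and then set up a contraction mapping on the product space $X = \symnn \times X_{0}$. The substitution $R = e^{at}Q$ reduces matters to producing the decomposition $R = A\Phi_{1} + V$ with $A\in\symnn$ and $V\in X_{0}$ for $R$ solving \eqref{req}. The corresponding mild formulation reads
\begin{equation*}
R(x,t) = (e^{t\Delta}Q_{0})(x) + \int_{0}^{t} (e^{(t-s)\Delta}N(R(\cdot,s)))(x)\,ds,
\end{equation*}
with $N(R):=be^{-at}(R^{2} - \tfrac{1}{3}\tr(R^{2})I) - ce^{-2at}\tr(R^{2})R$. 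I would define the candidate coefficient as the \emph{effective total mass at infinity},
\begin{equation*}
A := \int_{\mathbb{R}^{3}}Q_{0}(y)\,dy + \int_{0}^{\infty}\int_{\mathbb{R}^{3}}N(R(y,s))\,dy\,ds,
\end{equation*}
whose convergence I expect to follow from the $X_{0}$-bounds on $V$ together with the exponential factors $e^{-as}$ and $e^{-2as}$ in $N$.

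Next, by rearranging Duhamel's formula, the candidate remainder $V := R - A\Phi_{1}$ splits naturally into three pieces: the initial-data error $e^{t\Delta}Q_{0} - \Phi_{1}(\cdot,t)\int Q_{0}$; the time-integrated source error $\int_{0}^{t}\left[e^{(t-s)\Delta}N(R(s)) - \Phi_{1}(\cdot,t)\int N(R(s))\right]ds$; and a tail mass correction $-\Phi_{1}(\cdot,t)\int_{t}^{\infty}\int N(R(s))\,dy\,ds$. Proposition \ref{maeeresult}, applied with $\beta = 4+\delta/2$, directly controls the first piece in the $X_{0}$-norm, once one observes that the weight $(1+|y|)^{5+\delta/2}|Q_{0}(y)|$ is integrable on $\mathbb{R}^{3}$ --- which is precisely what membership of $Q_{0}$ in $\mathcal{A}$ supplies, since the resulting exponent $-3-\delta/2$ is below the critical threshold. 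The same estimate applied at each source time $s$ yields the second piece, while the third is handled by the pointwise decay of $\Phi_{1}$ combined with exponential damping of the time tail.

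The construction above defines a map $\mathcal{T}\colon X\to X$ via $\mathcal{T}(A,V) := (A', V')$, where $R := A\Phi_{1} + V$ is fed into the Duhamel formula. The main analytical step will be to verify that $\mathcal{T}$ is a contraction on a closed ball $\{(A,V)\in X : |A| + \|V\|_{X_{0}}\leq K\eta\}$ for $\eta>0$ chosen small depending only on $(a,b,c)\in\mathcal{D}$ and $K$ an absolute constant. The fixed point then furnishes the decomposition \eqref{decomposition}, and uniqueness of the smooth solution will transfer from \thmref{globexpro}.

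I expect the main obstacle to lie in the nonlinear estimates feeding the contraction. One must handle the three flavours of quadratic (and cubic) cross-terms in $N(A\Phi_{1} + V)$: the pure Gaussian terms such as $\Phi_{1}^{2}$ (excellent spatial decay, but only $(t+1)^{-3/2}$ in $L^{1}$), the mixed terms $\Phi_{1} V$ (combining Gaussian and algebraic weights), and the pure remainder terms $V^{2}$ (algebraic spatial decay governed by the $X_{0}$-weight). Each must be shown to fall in a weighted $L^{1}(\mathbb{R}^{3};d\omega_{\beta})$-space uniformly in $s$, so that \thmref{maeeresult} applies, and the resulting Duhamel-propagated errors must be shown to lie in $X_{0}$ with norms controlled by the appropriate power of $\|(A,V)\|_{X}$. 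Here the exponential factors $e^{-as}$ and $e^{-2as}$ in $N$ play a decisive role: they secure convergence of the $s$-integrals on $(0,\infty)$, they allow the tail term $\int_{t}^{\infty}\int N(R(s))\,dy\,ds$ to decay faster than $\Phi_{1}$ as $t\to\infty$, and they drive the contraction constant to zero as $\eta\to 0$.
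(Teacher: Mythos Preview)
Your overall architecture coincides with the paper's: the change of variable $R=e^{at}Q$, the ansatz $R=A\Phi_{1}+V$, the definition of $A$ as the total space-time mass, and the Banach fixed-point argument on $X=\symnn\times X_{0}$ are exactly what the authors do. Proposition~\ref{maeeresult} with $\beta=4+\delta/2$ is also the right tool for the initial-data error.

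There is, however, a real gap in your treatment of the Duhamel source term. You propose to control
\[
\int_{0}^{t}\Big[e^{(t-s)\Delta}N(R(s))-\Phi_{1}(\cdot,t)\textstyle\int N(R(s))\Big]\,ds
\]
by applying Proposition~\ref{maeeresult} ``at each source time $s$''. Two things go wrong. First, the proposition compares $e^{(t-s)\Delta}N$ with $\Phi(\cdot,t-s)\int N$, not with $\Phi_{1}(\cdot,t)\int N$; the discrepancy is not innocuous and the paper needs a separate heat-kernel difference lemma (their Lemma~\ref{heatkerdiff}) plus an integration-by-parts-in-time identity to absorb it. Second, and more seriously, the bound \eqref{myresult1} carries a factor $(t-s)^{-2}$, which is \emph{not integrable} as $s\to t$. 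You therefore cannot push the proposition all the way to $s=t$. The paper circumvents this by splitting the time integral at $s=t/2$: on $[t/2,t]$ they abandon Proposition~\ref{maeeresult} entirely and instead estimate the heat semigroup acting on $N$ directly, via a spatial decomposition into the near-diagonal region $\{|x-y|\le|x|/2\}$ and its complement (their estimates \eqref{goone}--\eqref{gotwo}). Only on $[0,t/2]$, where $(t-s)^{-2}\le 4t^{-2}$, do they invoke the proposition. Your sketch does not anticipate this singularity, and without the split the argument as written does not close.
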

\subsection{An Auxiliary System of Integral Equations}\label{rougharg}
With the above-defined Banach spaces in mind, for any given initial data $Q_{0}\in \mathcal{A}$ consider the following nonlinear operator $F:X\rightarrow X$ associated with $Q_{0}$ defined componentwise as $F_{1}:X\rightarrow\symnn$, where
\begin{equation}\label{fonecomponent}
F_{1}(A, V):=\int_{\mathbb{R}^{3}}Q_{0}(y)\,dy+\int_{0}^{\infty}\int_{\mathbb{R}^{3}}h(A\Phi_{1}(y, s)+V(y, s), s)\,dyds,
\end{equation}
and $F_{2}:X\rightarrow X_{0}$, where
\begin{align}\label{ftwocomponent}
F_{2}(A, V; \cdot, t) \quad := & \quad e^{t\Delta}\left(Q_{0}-\frac{e^{-|\cdot|^{2}/4}}{(4\pi)^{3/2}}\int_{\mathbb{R}^{3}}Q_{0}(y)\,dy\right) \notag \\ & \quad + \int_{0}^{t}e^{(t-s)\Delta}h(A\Phi_{1}(\cdot, s) + V(\cdot, s), s)\,ds \notag \\ & \quad - \Phi_{1}(\cdot, t)\int_{0}^{\infty}\int_{\mathbb{R}^{3}}h(A\Phi_{1}(y, s)+V(y, s), s)\,dyds.
\end{align}
The function $h$ is the nonlinearity in the transformed Q-tensor equation \eqref{req}, and is given by
\begin{equation*}
h(R, t):=b\,e^{-at}\left(R^{2}-\frac{1}{3}\mathrm{tr}\left(R^{2}\right)I\right)-c\,e^{-2at}\mathrm{tr}\left(R^{2}\right)R,
\end{equation*}
for $R\in\symnn$ and $t\geq 0$.

Suppose that there exists a fixed point $(A^{\ast}, V^{\ast})$ of the operator $F$ in $X$, i.e. that the integral equations
\begin{equation*}
A^{\ast}=F_{1}(A^{\ast}, V^{\ast})\quad \text{and} \quad V^{\ast}=F_{2}(A^{\ast}, V^{\ast})
\end{equation*}
hold in $\symnn$ and $X_{0}$, respectively. Multiplying throughout the first equality above by $\Phi_{1}(x, t)$ and then adding the contribution of the second equality, one finds that the integral equation
\begin{equation*}
A^{\ast}\Phi_{1}(\cdot, t)+V^{\ast}(\cdot, t) = e^{t\Delta}Q_{0} + \int_{0}^{t}e^{(t-s)\Delta}h(A^{\ast}\Phi_{1}+ V^{\ast}, s)\,ds
\end{equation*}
holds for the assumed fixed point $(A^{\ast}, V^{\ast})$. Writing $R^{\ast}:=A^{\ast}\Phi_{1}+ V^{\ast}$, we note that the above integral equation implies that $R^{\ast}$ is classically smooth and satisfies \eqref{req} on $\mathbb{R}^{3}\times (0, \infty)$. Following the time-dependent transformation effected by $Q^{\ast}(x, t) = e^{-at}R^{\ast}(x, t)$, one immediately finds that $Q^{\ast}$ satisfies \eqref{qteneq} on the same space-time domain. In turn, we may deduce from proposition \ref{globexpro} (uniqueness of solutions) that for the chosen initial datum $Q_{0}\in \mathcal{A}\subset H$, the corresponding solution $Q^{\ast}$ of the Q-tensor equation admits the decomposition 
\begin{equation*}
Q^{\ast}(x, t)=A^{\ast}e^{-at}\Phi_{1}(x, t)+e^{-at}V^{\ast}(x, t),
\end{equation*} 
for $t>0$ as claimed. Therefore, in order to achieve this representation formula it suffices to show that the operator $F:X\rightarrow X$ possesses a fixed point in $X$ for given $Q_{0}\in \mathcal{A}$.  We achieve this for a special class of $Q_{0}$ (namely those which satisfy $\|Q_{0}\|_{\mathcal{A}}<\eta$ for $\eta>0$ sufficiently small) through an application of Banach's fixed point theorem. Establishing the fact that $F$ is locally Lipschitz on $X$ is a little delicate, and we consider details of this calculation below.
\subsection{The Operator $F:X\rightarrow X$ is Locally Lipschitz}
In order to make our presentation more concise, we adopt the following notation. For given $A, B\in\symnn$ and $V, W \in X_{0}$, we define the \emph{contraction modulus} $k_{0}\equiv k_{0}(A, B; V, W)$ to be
\begin{equation*}
k_{0}:=\left(|A| + |B| + \| V\|_{X_{0}}+\| W\|_{X_{0}}\right)+\left(|A| + |B| + \| V\|_{X_{0}}+\| W\|_{X_{0}}\right)^{2},
\end{equation*}
and write $\phi\lesssim\psi$ whenever $\phi\leq Ck_{0}\psi$ and $C>0$ is some absolute constant. Moreover, to avoid any confusion which may arise from line to line, we interpret the string $\phi\lesssim\psi\lesssim\omega$ as $\phi\leq C_{1}k_{0}\psi\leq C_{2}k_{0}\omega$ for some absolute constants $C_{1}, C_{2}>0$. We demonstrate that the operator $F$ satisfies the difference property
\begin{equation*}
\| F(A, V) - F(B, W)\|_{X}\lesssim \| (A, V) - (B, W)\|_{X},
\end{equation*}
for any $A, B\in\symnn$ and $V, W\in X_{0}$, following which we deduce that $F$ is indeed well-defined as an operator on and with range in $X$ and thus is locally Lipschitz on $X$. Indeed, we gather these goals together in the following proposition.
\begin{prop}
The operator $F$ associated with $Q_{0}\in\mathcal{A}$ defined on $X$ component-wise in \eqref{fonecomponent} and \eqref{ftwocomponent} is well defined, i.e. has range in $X$, and also satisfies the local Lipschitz condition
\begin{equation}\label{lipschitz}
\| F(A, V) - F(B, W)\|_{X} \lesssim \|(A, V) - (B, W)\|_{X},
\end{equation}
whenever $(A, V)$ and $(B, W)$ belong to some bounded subset of $X$.
\end{prop}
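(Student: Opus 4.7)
The main task is to establish the Lipschitz inequality \eqref{lipschitz}; once it holds, well-definedness follows by choosing $(B,W)=(0,0)$ and noting that $F(0,0)\in X$, since the $F_{1}$-component is simply $\int_{\mathbb{R}^{3}}Q_{0}\,dy$, which is finite by the decay assumption on $Q_{0}\in\mathcal{A}$, while the $F_{2}$-component reduces to $e^{t\Delta}Q_{0} - \Phi_{1}(\cdot,t)\int_{\mathbb{R}^{3}}Q_{0}\,dy$, which belongs to $X_{0}$ by Proposition \ref{maeeresult} applied with $\beta=4+\delta/2$. Throughout the argument, set $R_{i}:=A_{i}\Phi_{1}+V_{i}$ with $(A_{1},V_{1})=(A,V)$ and $(A_{2},V_{2})=(B,W)$.

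The starting point is a pointwise estimate on the nonlinearity. Factoring $R_{1}^{2}-R_{2}^{2}$ and $\trr(R_{1}^{2})R_{1}-\trr(R_{2}^{2})R_{2}$ so as to pull out $R_{1}-R_{2}$, one obtains
\[ |h(R_{1},s)-h(R_{2},s)| \leq C\,e^{-as}\bigl(|R_{1}|+|R_{2}|\bigr)\bigl(1+|R_{1}|+|R_{2}|\bigr)|R_{1}-R_{2}|. \]
The termwise bounds $|R_{i}(y,s)|\leq|A_{i}|\,\Phi_{1}(y,s)+\|V_{i}\|_{X_{0}}\,\omega(y,s)^{-1}$, with the analogous estimate for $|R_{1}-R_{2}|$, reduce the task to controlling space-time integrals of monomials in $\Phi_{1}$ and $\omega^{-1}$ modulated by $e^{-as}$. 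Spatial integrability is straightforward: products of Gaussians $\Phi_{1}$ integrate in $y$ with $s$-scaling given by powers of $(s+1)^{-3/2}$, while $\int_{\mathbb{R}^{3}}(1+|y|/\sqrt{s+1})^{-n(4+\delta/2)}\,dy = O((s+1)^{3/2})$ as soon as $n(4+\delta/2)>3$, which is guaranteed by $\delta>0$. Integrability in $s$ is then supplied by the factor $e^{-as}$, and a direct bookkeeping shows that $|F_{1}(A,V)-F_{1}(B,W)|\lesssim\|(A-B,V-W)\|_{X}$, handling the $F_{1}$-component.

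The delicate part is the $X_{0}$-estimate on $F_{2}(A,V)-F_{2}(B,W)$, in which the $Q_{0}$-term cancels. Writing $g(y,s):=h(R_{1}(y,s),s)-h(R_{2}(y,s),s)$, the remainder can be reorganised as
\[ \int_{0}^{t}\Bigl( e^{(t-s)\Delta}g(\cdot,s) - \Phi(\cdot,t-s)\!\int_{\mathbb{R}^{3}}\!g(y,s)\,dy \Bigr)ds\;-\;\Phi_{1}(\cdot,t)\!\int_{t}^{\infty}\!\!\int_{\mathbb{R}^{3}}\!g\,dy\,ds\;+\;\mathcal{E}(\cdot,t), \]
where $\mathcal{E}$ collects error terms arising from the mismatch between $\Phi(\cdot,t-s)$ and $\Phi_{1}(\cdot,t)$. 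The integrand of the first bracket is precisely of the form treated by Proposition \ref{maeeresult} with $\beta=4+\delta/2$, yielding a pointwise bound proportional to $(t-s+1)^{-2}(1+|x|/\sqrt{8(t-s+1)})^{-(4+\delta/2)}\int|y|(1+|y|/\sqrt{8(t-s+1)})^{4+\delta/2}|g(y,s)|\,dy$.

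The main obstacle is to verify, after multiplying by $\omega(x,t)$ and integrating in $s$, that this bound is uniform in $(x,t)$ with the right $k_{0}$-dependence. To this end I would split $[0,t]=[0,t/2]\cup[t/2,t]$: on $[0,t/2]$ one exploits $t-s+1\asymp t+1$, so that the heat-kernel factor $(t-s+1)^{-2}$ combines with the $(t+1)^{2}$ appearing in $\omega$ to produce a bounded ratio and a spatial profile matching $\omega^{-1}$; on $[t/2,t]$, where the kernel is near-singular, the factor $e^{-as}\leq e^{-at/2}$ embedded in $g$ absorbs the loss. The remaining pieces, namely $\mathcal{E}$ and the tail $\Phi_{1}(\cdot,t)\int_{t}^{\infty}\!\int g\,dy\,ds$, are dispatched by direct Gaussian estimates: $\Phi_{1}(\cdot,t)$ already carries adequate spatial decay, and the tail contributes an $e^{-at/2}$ factor which beats any negative power of $(t+1)$. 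Assembling these estimates yields \eqref{lipschitz} with the advertised $k_{0}$-dependence, completing the plan.
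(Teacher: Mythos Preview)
Your treatment of $F_{1}$ and of the $[0,t/2]$ portion of $F_{2}$ via Proposition~\ref{maeeresult} is essentially what the paper does. The genuine gap is on $[t/2,t]$.

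Proposition~\ref{maeeresult} yields a factor $C(t-s)^{-2}$, not $(t-s+1)^{-2}$ as you write, and this is non-integrable at $s=t$; in fact once one tracks the growth of $(1+|y|/\sqrt{8(t-s)})^{4+\delta/2}$ against the decay $(1+|y|/\sqrt{s+1})^{-8-\delta}$ of $g$, the singularity worsens to $(t-s)^{-4-\delta/4}$. Your reorganisation also forces the error term $\mathcal{E}(x,t)=\int_{0}^{t}[\Phi(x,t-s)-\Phi_{1}(x,t)]\int g\,dy\,ds$ to carry a $(t-s)^{-3/2}$ singularity at $x=0$. In short, you have split a well-defined quantity into two pieces that are \emph{individually} divergent near $s=t$, and the factor $e^{-as}\leq e^{-at/2}$, being a function of $t$ alone, cannot cure a non-integrability in the $s$-variable.

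The paper's remedy is to avoid any mass subtraction on $[t/2,t]$: it keeps $I(x,t)=\int_{t/2}^{t}e^{(t-s)\Delta}g(\cdot,s)\,ds$ intact and estimates the convolution pointwise by a near/far splitting of the $y$-integral. On $\mathcal{R}_{1}(x)=\{|x-y|\leq|x|/2\}$ one has $|y|\geq|x|/2$, so the weight $(1+|y|/\sqrt{s+1})^{-8-\delta}$ from the nonlinearity transfers directly to $(1+|x|/\sqrt{t+1})^{-8-\delta}$; on the complement one peels off $e^{-|x-y|^{2}/8(t-s)}\leq C(1+|x|/\sqrt{t+1})^{-4-\delta/2}$ from the kernel, using $t-s\leq t/2$. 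In both regions the remaining $y$-integral is bounded by the unit mass of the heat kernel, so the $s$-integrand stays bounded as $s\to t$ and carries the factor $e^{-as}(s+1)^{-3}$, whose integral over $[t/2,t]$ produces the required $(t+1)^{-2}$. Proposition~\ref{maeeresult} is reserved for the $[0,t/2]$ range, where $t-s\geq t/2$ keeps its bound harmless.

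A smaller point: your well-definedness argument also relies on Proposition~\ref{maeeresult} for $e^{t\Delta}Q_{0}-\Phi_{1}(\cdot,t)\int Q_{0}$, but that bound degenerates as $t\to0^{+}$ for the same reason; the paper handles $0\leq t\leq 1$ separately, again via the $\mathcal{R}_{1}/\mathcal{R}_{2}$ splitting applied to $e^{t\Delta}Q_{0}$.
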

\begin{proof}
We begin by stating a useful estimate upon which we call frequently during the course of our proof.
\begin{lem} For $A, B\in\symnn$, $V, W\in X_{0}$, $x\in\mathbb{R}^{3}$ and $t\geq 0$, we have
\begin{align}\label{hestimate}
& \left|h(A\Phi_{1}(x, t)+V(x, t), t)-h(B\Phi_{1}(x, t)+W(x, t), t)\right| \notag \\ \lesssim \quad & \left(1+\frac{|x|}{\sqrt{t+1}}\right)^{-8-\delta}e^{-at}(t+1)^{-3}\|(A, V)-(B, W)\|_{X}.
\end{align}
\end{lem} 
\begin{proof}
The proof is skipped and is left to the reader. 
\end{proof}
We now look to obtain estimates on the components of $F$ and its differences in the appropriate spaces. 
\subsubsection*{The component $F_{1}:X\rightarrow \symnn$}
The treatment of $F_{1}$ is straightforward: following an application of the estimate \eqref{hestimate}, we deduce that the inequality
\begin{equation}\label{flipest}
|F_{1}(A, V)-F_{1}(B, W)|\leq C_{1}k_{0}(A, B; V, W)\|(A, V) - (B, W)\|_{X}
\end{equation}
holds for any $A, B\in\symnn$ and $V, W\in X_{0}$ and constant $C_{1}>0$. Furthermore, one can quickly verify that $\trr(F_{1}(A, V))=0$ and indeed $F_{1}(A, V)<\infty$ for any $(A, V)\in X$. 

We now turn our attention to the component $F_{2}$, whose analysis requires considerably more care than that of $F_{1}$. In order to deal with the singularity of the heat kernal at $t=0$, we cleave our estimates into one part which is in a neighbourhood of the origin in time, and one other part which is bounded away from zero in time. We now consider the more involved case when time $t$ is bounded away from the origin.
\subsubsection*{The component $F_{2}:X\rightarrow X_{0}$ when $t\geq 1$}
A judicious splitting of integrals faciliates a swift analysis. For $(A, V), (B, W)\in X$, we begin by writing the difference as the sum of two pieces, namely
\begin{equation*}
F_{2}(A, V; x, t) - F_{2}(B, V; x, t) = I(x, t)+J(x, t),
\end{equation*}
where 
\begin{align*}
I(\cdot, t):=\int_{t/2}^{t}\hkern\big( h(A\Phi_{1}(\cdot, s)+V(\cdot, s), s)-h(B\Phi_{1}(\cdot, s)+W(\cdot, s), s)\big)\,ds,
\end{align*}
and
\begin{align*}
J(\cdot, t):= \int_{0}^{t/2}\hkern\big( h(A\Phi_{1}+V, s)-h(B\Phi_{1}+W, s)\big)\,ds\\ \quad -\Phi_{1}(\cdot, t)\int_{0}^{\infty}\int_{\mathbb{R}^{3}}\big(h(A\Phi_{1}+V, \tau)-h(B\Phi_{1}+W, \tau)\big)\,dyd\tau.
\end{align*}
We deal initially with the estimate on the term $I$ and consider at first its temporal integrand, given by
\begin{equation}\label{back}
\int_{\mathbb{R}^{3}}\frac{e^{-|x-y|^{2}/4(t-s)}}{(4\pi(t-s))^{3/2}}\left(h(A\Phi_{1}+V, s)-h(B\Phi_{1}+W, s)\right)\,dy
\end{equation} 
for $x\in \mathbb{R}^{3}$, $t\geq 1$ and $t/2\leq s\leq t$. For any given fixed $x\in\mathbb{R}^{3}$ the inequality 
\begin{equation}\label{auxineqone}
\left(1+\frac{|y|}{\sqrt{s+1}}\right)^{-8-\delta}\leq C\left(1+\frac{|x|}{\sqrt{t+1}}\right)^{-8-\delta}
\end{equation}
holds for all $\delta>0$, $t/2\leq s\leq t$ and $y$ in the region $\mathcal{R}_{1}(x)$, where
\begin{equation*}
\mathcal{R}_{1}(x):=\left\{y\in\mathbb{R}^{3}\,:\,|x-y|\leq \frac{|x|}{2}\right\},
\end{equation*}
and the constant $C>0$ depends only on $\delta>0$. For a fixed $x\in\mathbb{R}^{3}$, we consider the integral \eqref{back} but over the region $\mathcal{R}_{1}(x)$ as opposed to the whole space:
\begin{align}\label{goone}
&\int_{\mathcal{R}_{1}(x)}\Phi(x-y, t-s)\left(h(A\Phi_{1} + V, s) - h(B\Phi_{1} + W, s)\right)\,ds \notag \\ \overset{\eqref{hestimate}}{\lesssim} & \hspace{2mm }\frac{\|(A, V)-(B, W))\|_{X}}{e^{as}(s+1)^{3}}\int_{\mathcal{R}_{1}(x)}\Phi(x-y, t-s)\left(1+\frac{|y|}{\sqrt{s+1}}\right)^{-8-\delta}\,dy\notag \\ \overset{ \eqref{auxineqone}}{\lesssim} & \hspace{2mm} \frac{\|(A, V)-(B, W))\|_{X}}{e^{as}(s+1)^{3}}\left(1+\frac{|x|}{\sqrt{t+1}}\right)^{-8-\delta}\int_{\mathcal{R}_{1}(x)}\Phi(x-y, t-s)\,dy\notag \\  \lesssim & \hspace{2mm} \frac{\|(A, V)-(B, W))\|_{X}}{e^{as}(s+1)^{3}}\left(1+\frac{|x|}{\sqrt{t+1}}\right)^{-8-\delta},
\end{align}
using the fact that the heat kernel has unit mass over $\mathbb{R}^{3}$. For the same fixed $x\in\mathbb{R}^{3}$, the inequality
\begin{equation}\label{auxexp}
\exp\left(-\frac{1}{8}\frac{|x-y|^{2}}{(t-s)}\right)\leq C\left(1+\frac{|x|}{\sqrt{t+1}}\right)^{-4-\frac{\delta}{2}}
\end{equation}
holds for $y$ in the region $\mathcal{R}_{2}(x):=\mathbb{R}^{3}\setminus\mathcal{R}_{1}(x)$, whenever $t/2\leq s\leq t$. Now proceeding in a similar manner as for the region $\mathcal{R}_{1}(x)$, we find that
\begin{align}\label{gotwo}
& \int_{\mathcal{R}_{2}(x)}\Phi(x-y, t-s)\left(h(A\Phi_{1} + V, s) - h(B\Phi_{1} + W, s)\right)\,dy \notag \\ = & \hspace{2mm} \int_{\mathcal{R}_{2}(x)}\frac{e^{-|x-y|^{2}/8(t-s)}}{(4\pi(t-s))^{3/2}}e^{-|x-y|^{2}/8(t-s)}\left(h(A\Phi_{1} + V, s) - h(B\Phi_{1} + W, s)\right)\,dy \notag \\ \overset{\eqref{auxexp}}{\leq} & \hspace{2mm} C\left(1+\frac{|x|}{\sqrt{t+1}}\right)^{-4-\frac{\delta}{2}}\times \notag \\ & \int_{\mathcal{R}_{2}(x)}\frac{e^{-|x-y|^{2}/8(t-s)}}{(4\pi(t-s))^{3/2}}\left|h(A\Phi_{1} + V, s) - h(B\Phi_{1} + W, s)\right|\,dy & \notag \\ \overset{\eqref{hestimate}}{\lesssim} & \frac{\|(A, V)-(B, W))\|_{X}}{e^{as}(s+1)^{3}}\left(1+\frac{|x|}{\sqrt{t+1}}\right)^{-4-\frac{\delta}{2}}\int_{\mathcal{R}_{2}(x)}\frac{e^{-|x-y|^{2}/8(t-s)}}{(4\pi(t-s))^{3/2}}\,dy.
\end{align}
Combining results \eqref{goone} and \eqref{gotwo} we obtain
\begin{align}\label{joneest}
& \displaystyle\int_{\mathbb{R}^{3}}\Phi(x-y, t-s)\left(h(A\Phi_{1}(y, s) + V(y, s), s) - h(B\Phi_{1}(y, s) + W(y, s), s)\right)\,dy \notag \\ \lesssim & \displaystyle \hspace{2mm} \frac{1}{e^{as}(s+1)^{3}}\left(1+\frac{|x|}{\sqrt{t+1}}\right)^{-4-\frac{\delta}{2}}\|(A, V)-(B, W))\|_{X}.
\end{align}
Finally, integrating \eqref{joneest} with respect to $s$ over the interval $t/2\leq s \leq t$, we find
\begin{equation}\label{finaljone}
I(x, t) \lesssim \left(1+\frac{|x|}{\sqrt{t+1}}\right)^{-4-\frac{\delta}{2}}(t+1)^{-2}\|(A, V) - (B, W)\|_{X},
\end{equation}
as desired. 

Let us progress to the required estimate for $J$. It is at this point we utilise the heat equation `decay improvement' proposition \ref{maeeresult}. For convenience, for our chosen $(A, V), (B, W)\in X$ we denote by $h_{0}:\mathbb{R}^{3}\times (0, \infty)\rightarrow\symnn$ the map
\begin{equation*}
h_{0}(x, t):=h(A\Phi_{1}(x, t)+V(x, t), t)-h(B\Phi_{1}(x, t)+W(x, t), t).
\end{equation*}
If we denote by $\overline{M}(x, t, s)$ the quantity
\begin{equation*}
\overline{M}(\cdot, t, s):=\int_{\mathbb{R}^{3}}\frac{e^{-|\cdot-y|^{2}/4(t-s)}}{(4\pi(t-s))^{3/2}}h_{0}(y, s)\,dy-\Phi(\cdot, t-s)\left(\int_{\mathbb{R}^{3}}h_{0}(y, s)\,dy\right)
\end{equation*}
and also stipulate that $H_{0}:(0, \infty)\rightarrow \symnn$ be
\begin{equation*}
H_{0}(t):=\int_{t}^{\infty}\int_{\mathbb{R}^{3}}h_{0}(y, \tau)\,dyd\tau,
\end{equation*}
one may verify that the identity
\begin{align*}
J(x, t) = & \overbrace{\int_{0}^{t/2}\overline{M}(x, t, s)\,ds}^{J_{1}(x, t):=} + \overbrace{H_{0}(0)\big(\Phi(x, t)-\Phi_{1}(x, t)\big)}^{J_{2}(x, t):=} -\overbrace{\Phi\left(x, \frac{t}{2}\right)H_{0}\left(\frac{t}{2}\right)}^{J_{3}(x, t):=} \notag \\ \quad & +\underbrace{\int_{0}^{t/2}\left(\frac{6\pi}{t-s}-\frac{|x|^{2}}{4(t-s)^{2}}\right)\Phi(x, t-s)H_{0}(s)\,ds}_{J_{4}(x, t):=}
\end{align*}
holds for all $x\in\mathbb{R}^{3}$ and $t\geq 1$. 
It is straightforward to show that
\begin{equation}\label{itwotofive}
J_{i}(x, t) \lesssim \left(1+\frac{|x|}{\sqrt{t+1}}\right)^{-4-\frac{\delta}{2}}(t+1)^{-2}\|(A, V) - (B, W)\|_{X},
\end{equation}
for $i=3, 4$ simply by using familiar properties of the heat kernel, inequality \eqref{hestimate} and the fact that $(t-s)^{-1}\leq 4(t+1)^{-1}$ for $t\geq 1$ and $s$ in the interval $0\leq s\leq t/2$. The treatment of the cases $i=1, 2$ require a little more care. An application of the result of proposition \ref{maeeresult} to the term $J_{1}$ yields
\begin{align*}
& \quad \int_{0}^{t/2}\overline{M}(x, t, s)\,ds \\ \overset{\eqref{myresult1}}{\leq} & \quad C \int_{0}^{t/2}\int_{\mathbb{R}^{3}}\frac{|y|}{(t-s)^{2}}\left(\frac{1+|y|/\sqrt{8(t-s)}}{1+|x|/\sqrt{8(t-s)}}\right)^{4+\frac{\delta}{2}}\times \\ & \quad \left|h(A\Phi_{1} + V, s) - h(B\Phi_{1} + W, s)\right|\,dyds \\ \overset{\eqref{hestimate}}{\lesssim} & \quad \int_{0}^{t/2}\frac{e^{-as}}{(s+1)^{3}}\int_{\mathbb{R}^{3}}\frac{|y|}{(t-s)^{2}}\left(1+\frac{|y|}{\sqrt{8(t-s)}}\right)^{4+\frac{\delta}{2}}\left(1+\frac{|y|}{\sqrt{s+1}}\right)^{-8-\delta}\,dyds \\ & \qquad \times \left(1+\frac{|x|}{\sqrt{8(t-s)}}\right)^{-4-\frac{\delta}{2}}\|(A, V)-(B, W))\|_{X}.
\end{align*}
Now, noticing that for $0\leq s\leq t/2$ and $t\geq 1$ we have the string of inequalities $(t-s)^{-1/2}\leq \sqrt{2}t^{-1/2}\leq C(t+1)^{-1/2}\leq C(s+1)^{-1/2}$, we deduce from the above that
\begin{align}\label{ione}
& \quad \int_{0}^{t/2}\overline{M}(x, t, s)\,ds \notag\\ \lesssim \quad & \int_{0}^{t/2}\left(1+\frac{|x|}{\sqrt{8(t-s)}}\right)^{-4-\frac{\delta}{2}}\frac{e^{-as}}{(s+1)}\frac{1}{(t-s)^{2}}\left(\int_{\mathbb{R}^{3}}(1+|y|)^{-3-\frac{\delta}{2}}\,dy\right)\,ds \notag \\ & \qquad \times\|(A, V)-(B, W))\|_{X}\notag \\ \lesssim \quad & \left(1+\frac{|x|}{\sqrt{t+1}}\right)^{-4-\frac{\delta}{2}}(t+1)^{-2}\|(A, V)-(B, W))\|_{X},
\end{align}
since $\delta>0$ gives us the required integrability of $(1+|\cdot|)^{-3-\delta/2}$ in dimension three. It remains to verify such an estimate holds for the term $J_{2}$, and for this we require the following simple lemma. 
\begin{lem}\label{heatkerdiff} For $x\in \mathbb{R}^{3}$ and $t\geq 1$, the heat kernel difference $\Phi_{1}(x, t)-\Phi(x, t)$ satisfies the inequality
\begin{equation}\label{heatdiff}
\left|\Phi_{1}(x, t)-\Phi(x, t)\right|\leq \frac{2 e^{-|x|^{2}/8(t+1)}}{(t+1)^{5/2}}.
\end{equation}
\end{lem}
\begin{proof} Consider the smooth map $\psi$ defined by
\begin{equation*}
\psi(z, t):= e^{-\frac{z^{2}}{4}\left(\frac{1}{2(t+1)}\right)}-e^{-\frac{z^{2}}{4}\left(\frac{2+t}{2t(t+1)}\right)}.
\end{equation*}
For any fixed $t\geq 1$, by considering the equation $\psi_{z}(z, t)=0$ one may show that $z\mapsto\psi(z, t)$ is controlled by its unique global maximum over the set $[1, \infty)$, namely
\begin{equation*}
\psi(z, t)\leq 2 \left(\frac{1}{\frac{2}{t}+1}\right)^{t/2}\frac{1}{t+1}.
\end{equation*}
Upon setting $z=|x|$, we readily deduce from the above 
\begin{equation*}
e^{-|x|^{2}/4(t+1)}-e^{-|x|^{2}/4t}\leq \frac{2e^{-|x|^{2}/8(t+1)}}{(t+1)},
\end{equation*}
from which estimate \eqref{heatdiff} quickly follows. 
\end{proof}
Piecing this together with the estimates \eqref{itwotofive} and \eqref{ione}, we obtain
\begin{equation*}
J(x, t) \lesssim \|(A, V) - (B, W)\|_{X}\left(1+\frac{|x|}{\sqrt{t+1}}\right)^{-4-\frac{\delta}{2}}(t+1)^{-2},
\end{equation*}
which along with \eqref{finaljone} provides
\begin{align}\label{ftwotgeqone}
& \quad \left|F_{2}(A, V; x, t) - F_{2}(B, W; x, t)\right| \notag \\ \lesssim & \quad \|(A, V) - (B, W)\|_{X}\left(1+\frac{|x|}{\sqrt{t+1}}\right)^{-4-\frac{\delta}{2}}(t+1)^{-2},
\end{align}
for $x\in\mathbb{R}^{3}$ and $t\geq 1$. 
\subsubsection*{The component $F_{2}:X\rightarrow X_{0}$ when $0\leq t \leq 1$} This case follows almost immediately from \eqref{hestimate} and the fact that our time interval of interest in this instance is compact. Thus, combining this with \eqref{ftwotgeqone}, rearranging and taking norms in $X_{0}$, we find that
\begin{equation}\label{ftwo}
\|F_{2}(A, V) - F_{2}(B, W)\|_{X_{0}}\leq C_{2}k_{0}(A, B; V, W)\|(A, V) - (B, W)\|_{X}
\end{equation}
for some constant $C_{2}>0$. We have therefore verified the difference property \eqref{lipschitz}.
\subsubsection*{The operator $F_{2}:X\rightarrow X_{0}$ is well defined}
All that remains to be checked is that the map
\begin{equation*}
(x, t) \mapsto \int_{\mathbb{R}^{3}}\frac{e^{-|x-y|^{2}/4t}}{(4\pi t)^{3/2}}\left(Q_{0}(y)-\frac{e^{-|y|^{2}/4}}{(4\pi)^{3/2}}\int_{\mathbb{R}^{3}}Q_{0}(z)\,dz\right)\,dy
\end{equation*}
is an element of $X_{0}$. The case when $t\geq 1$ may be tackled using inequality \eqref{myresult1} and lemma \ref{heatkerdiff}. On the other hand, for the case when $0\leq t\leq 1$, we need only concern ourselves with the behaviour of the heat term $e^{t\Delta}Q_{0}$ near the origin in time. We obtain the required estimates by writing
\begin{equation*}
(e^{t\Delta}Q_{0})(x)=\int_{\mathcal{R}_{1}(x)}\Phi(x-y, t)Q_{0}(y)\,dy+\int_{\mathcal{R}_{2}(x)}\Phi(x-y, t)Q_{0}(y)\,dy
\end{equation*} 
and applying inequalities \eqref{auxineqone} and \eqref{auxexp} in the manner previously outlined over the regions $\mathcal{R}_{1}(x)$ and $\mathcal{R}_{2}(x)$, respectively. 

Finally, setting $(B, W)=0$ in \eqref{ftwo} completes the proof that $F:X\rightarrow X$ is both well-defined and locally Lipschitz on $X$.
\end{proof}
Let $\mathfrak{B}:=\overline{B(0, \varepsilon_{0})}\subset X$ denote the closed ball of radius $\varepsilon_{0}>0$ in $X$, where $\varepsilon_{0}$ is yet to be fixed. To close the proof of theorem \ref{thm1} by means of an application of Banach's fixed point theorem, it remains to show that $F: \mathfrak{B}\rightarrow \mathfrak{B}$ is a strictly contractive operator for $\varepsilon_{0}$ chosen sufficiently small. To this end, fix $\varepsilon_{0}$ to be the positive root of the quadratic equation associated with the contraction constraint
\begin{equation*}
\max\left\{C_{1}, C_{2}\right\}\left(2\varepsilon+4\varepsilon^{2}\right)=\frac{1}{4},
\end{equation*}
where $C_{1}$ and $C_{2}$ are those constants appearing in \eqref{flipest} and \eqref{ftwo}, respectively. Taking into account the definition of the contraction modulus $k_{0}$, we deduce from \eqref{flipest} and \eqref{ftwo} that
\begin{equation*}
\|F(A, V)-F(B, W)\|_{X}\leq \frac{1}{2}\|(A, V) - (B, W)\|_{X},
\end{equation*}
for all $(A, V), (B, W)\in \mathfrak{B}$. Given that we also have the estimate
\begin{align*}
& \quad \|F(A, V)\|_{X} \\ \leq & \quad \sup_{(x, t)}\left(1+\frac{|x|}{\sqrt{t+1}}\right)^{4+\frac{\delta}{2}}(t+1)^{2}\left|e^{t\Delta}\left(Q_{0}-\frac{e^{-|\cdot|^{2}/4}}{(4\pi)^{3/2}}\int_{\mathbb{R}^{3}}Q_{0}(z)\,dz\right)(x)\right| \\ \quad & + \left|\int_{\mathbb{R}^{3}}Q_{0}(y)\,dy\right| + \frac{1}{2}\|(A, V)\|_{X},
\end{align*}
for any $(A, V)\in\mathfrak{B}$, it is clear we may find $\eta>0$ to ensure that the quantity
\begin{align*}
& \quad \sup_{(x, t)}\left(1+\frac{|x|}{\sqrt{t+1}}\right)^{4+\frac{\delta}{2}}(t+1)^{2}\left|e^{t\Delta}\left(Q_{0}-\frac{e^{-|\cdot|^{2}/4}}{(4\pi)^{3/2}}\int_{\mathbb{R}^{3}}Q_{0}(z)\,dz\right)(x)\right| \\ + & \quad  \left|\int_{\mathbb{R}^{3}}Q_{0}(y)\,dy\right|
\end{align*}  
is less than or equal to $\varepsilon_{0}/2$ for all $Q_{0}\in\mathcal{A}$ satisfying $\|Q_{0}\|_{\mathcal{A}}\leq \eta$. Thus, for all such $Q_{0}$ satisfying this `smallness' condition, the associated nonlinear operators $F$ are strictly contractive with range in $\mathfrak{B}$. By the Banach fixed point theorem, there exists a unique (relabeled) fixed point $(A, V)\in \mathfrak{B}$ and so from the discussion in section \ref{rougharg} follows the proof of theorem \ref{thm1}, namely
\begin{equation*}
Q(x, t)=Ae^{-at}\Phi_{1}(x, t)+e^{-at}V(x, t),
\end{equation*}
for some $A\in\symnn$ and $V\in X_{0}$ satisfying $\|(A, V)\|_{X}\leq \varepsilon_{0}$.
\begin{rem}
We henceforth denote by $\mathcal{A}^{\circ}\subset\mathcal{A}$ the open subset of all initial data $Q_{0}\in\mathcal{A}$ for which the above decomposition result holds, namely
\begin{equation*}\label{acirc}
\mathcal{A}^{\circ}:=\left\{R\in L^{\infty}(\mathbb{R}^{3})\,:\, \underset{x\in\mathbb{R}^{3}}{\mathrm{ess}\sup}\left(1+|x|\right)^{8+\delta}|R(x)|< \eta\right\}.
\end{equation*}
\end{rem}
\section{The Scaling Regime $L(t)=t^{1/2}$}\label{sectionfour}
Before we discuss the behaviour of the correlation function $c_{\mu_{0}}(r, t)$ as $t\rightarrow\infty$, we comment on the behaviour of the correlation function concentrated on \emph{individual} solutions. Utilising the decomposition formula $$Q(x, t)=Ae^{-at}\Phi_{1}(x, t)+e^{-at}V(x, t)$$ obtained in theorem \ref{thm1}, we investigate how the quantity
\begin{equation*}
c(r, t)=\frac{\displaystyle\int_{\mathbb{R}^{3}}\mathrm{tr}\left(Q(x+r, t)Q(x, t)\right)\,dx}{\displaystyle\int_{\mathbb{R}^{3}}\mathrm{tr}\left(Q(x, t)^{2}\right)\,dx}
\end{equation*}
behaves for large time. Firstly, noting that we have the equality
\begin{equation*}
\int_{\mathbb{R}^{3}}\Phi_{1}(x+r, t)\Phi_{1}(x, t)\,dx=\frac{1}{\sqrt{8}}\frac{e^{-|r|^{2}/8(t+1)}}{(4\pi(t+1))^{3/2}},
\end{equation*}
and also that any $V\in X_{0}$ has the norm decay property
\begin{equation*}
\|V(\cdot, t)\|_{2} \leq C\|V\|_{X_{0}}(t+1)^{-5/4},
\end{equation*}
one may then verify that
\begin{equation}\label{veryhelpful}
\frac{\sqrt{8}e^{2at}(4\pi(t+1))^{3/2}}{\mathrm{tr}(A^{2})}\int_{\mathbb{R}^{3}}\mathrm{tr}\left(Q(x+r, t)Q(x, t)\right)\,dx \leq e^{-|r|^{2}/8(t+1)}+\omega(t)
\end{equation}
and 
\begin{equation}\label{alsoveryhelpful}
\frac{\mathrm{tr}(A^{2})}{\sqrt{8}e^{2at}(4\pi(t+1))^{3/2}}\left(\int_{\mathbb{R}^{3}}\mathrm{tr}\left(Q(x, t)^{2}\right)\,dx\right)^{-1} \leq \frac{1}{1+\omega(t)},
\end{equation}
for all $t>0$ sufficiently large and $r\in \mathbb{R}^{3}$, \emph{provided} $A\neq 0$, where the function $\omega$ is bounded and continuous on $[1, \infty)$ and decays at least as quickly at $t^{-1/2}$ as $t\rightarrow \infty$. Taking the product of \eqref{veryhelpful} with \eqref{alsoveryhelpful} above, one quickly finds after subtracting off the contribution $\exp(-|r|^{2}/8t)$ that
\begin{align*}
& \quad c(r, t)-e^{-|r|^{2}/8t}\\ \leq & \quad  \frac{1}{1+\omega(t)}\left(e^{-|r|^{2}/8(t+1)}-e^{-|r|^{2}/8t}\right) - \frac{\omega(t)}{1+\omega(t)}e^{-|r|^{2}/8t}+\frac{\omega(t)}{1+\omega(t)},
\end{align*}
from which we deduce 
\begin{equation}\label{corrindiv}
\left\|c(r, t)-e^{-\frac{|r|^{2}}{8t}}\right\|_{L^{\infty}(\mathbb{R}^{3};\,dr)}=\mathcal{O}\left(t^{-1/2}\right) \quad \text{as} \quad t\longrightarrow \infty.
\end{equation}
Let us emphasise once more that the above calculation is only valid whenever the constant matrix $A$ is non-zero. If $A=0$, one discovers that the correlation function has the form
\begin{equation*}
\displaystyle c(r, t)=\frac{\displaystyle \int_{\mathbb{R}^{3}}\mathrm{tr}\left(V(x+r, t)V(x, t)\right)\,dx}{\displaystyle \int_{\mathbb{R}^{3}}\mathrm{tr}\left(V(x, t)^{2}\right)\,dx},
\end{equation*}
from which no scaling information on $c(r, t)$ can be gleaned. This is, of course, not the case when $A\neq 0$ as the map $(r, t)\mapsto e^{-|r|^{2}/8t}$ is manifestly self-similar on $\mathbb{R}^{3}\times (0, \infty)$. We now demonstrate that the set of all initial data in $\mathcal{A}^{\circ}$ which possibly give rise to solutions for which $A=0$ in the decomposition \eqref{decomposition} is rare, in the sense that it is a closed subset of $\mathcal{A}^{\circ}$ containing no open $\|\cdot\|_{\mathcal{A}}$-ball, i.e. it has empty interior. In particular, the complement of this set in $\mathcal{A}^{\circ}$ is dense in $\mathcal{A}^{\circ}$.
\subsection{The matrix $A$ cannot be zero on `large' sets of initial data in $\mathcal{A}^{\circ}$}\label{amatrix}
It is our aim to show that the scaling behaviour of solutions is generic amongst all those evolving from initial data satisfying the smallness condition \eqref{smallness}. Let us reiterate that our notion of `generic' here is that the set of all such initial data in $\mathcal{A}^{\circ}$ giving rise to solutions with constant matrix $A=0$ constitute a closed set containing no open ball. 

Firstly, we note that as each $Q_{0}\in \mathcal{A}^{\circ}$ gives rise to a unique solution of the shape
\begin{equation}\label{decomp}
Q(x, t)=Ae^{-at}\Phi_{1}(x, t)+e^{-at}V(x, t),
\end{equation}
where each matrix $A$ satisfies the explicit identity (c.f. formula \eqref{fonecomponent})
\begin{equation*}
A=\int_{\mathbb{R}^{3}}Q_{0}(y)\,dy + \int_{0}^{\infty}\int_{\mathbb{R}^{3}}h(Q(y, \tau), \tau)\,dyd\tau,
\end{equation*}
we may ask about the nature of the resulting map between $\mathcal{A}^{\circ}$ and $\symnn$ which yields $A$ from $Q_{0}$. It is well known that real-analytic maps between open connected sets of Banach spaces cannot be constant on open subsets unless they are identically constant on the whole set: see, for instance, \textsc{Buffoni and Toland} \cite{buffoni1}, theorem 4.3.9. Thus, if we demonstrate the map $Q_{0}\mapsto A$ is real-analytic on $\mathcal{A}^{\circ}$ and is indeed not identically the zero map, it follows that the set of all initial data which produce the result $A=0$ in the decomposition \eqref{decomp} is closed in $\mathcal{A}^{\circ}$ and contains no $\|\cdot\|_{\mathcal{A}}$-open ball.

In this direction, consider the map $\mathcal{F}_{0}:\mathcal{X}\times \mathcal{A}\rightarrow \mathcal{X}$ defined by
\begin{equation*}
\mathcal{F}_{0}(Z(\cdot, t), z):=e^{t\Delta}z+\int_{0}^{t}e^{(t-s)\Delta}h(Z(\cdot, s), s)\,ds-Z(\cdot, t),
\end{equation*}
for $Z\in\mathcal{X}$, $z\in\mathcal{A}$ and $t\geq 0$, where $\mathcal{X}$ is the Banach space of continuous maps
\begin{equation*}
\mathcal{X}:=\left\{U:\,\sup_{(x, t)}\left(1+\frac{|x|}{\sqrt{t+1}}\right)^{4+\frac{\delta}{2}}(t+1)^{3/2}|U(x, t)|<\infty\right\}
\end{equation*}
endowed with the natural weighted supremum norm $\|\cdot\|_{\mathcal{X}}$. We also recall that the space $\mathcal{A}$ is endowed with the norm $\|R\|_{\mathcal{A}}=\text{ess sup}_{x\in\mathbb{R}^{3}}(1+|x|)^{8+\delta}|R(x)|$. In what follows, we wield results from the theory of analytic Banach space-valued maps. We refer the reader to \textsc{Zeidler} \cite{MR816732} for the basic definitions and results. As a helpful first step, we establish the following proposition.
\begin{prop}
The map $\mathcal{F}_{0}:\mathcal{X}\times \mathcal{A}\rightarrow \mathcal{X}$ is real-analytic.
\end{prop}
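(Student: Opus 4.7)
The plan is to observe that $\mathcal{F}_{0}$ decomposes as a finite sum of continuous homogeneous polynomials in the two variables $(Z, z)$, and then invoke the standard fact that every continuous polynomial between Banach spaces is real-analytic (indeed entire). See \textsc{Zeidler} \cite{MR816732} for this general principle, which reduces the claim to a collection of multilinear boundedness estimates of a type we have already handled above.

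Concretely, I would split $\mathcal{F}_{0}$ into four pieces. The pieces $z\mapsto e^{t\Delta}z$ and $Z\mapsto -Z$ are linear in their respective arguments, so I only need to check that the heat semigroup maps $\mathcal{A}$ into $\mathcal{X}$ boundedly (which follows by splitting $\mathbb{R}^{3}$ into the regions $\mathcal{R}_{1}(x)$ and $\mathcal{R}_{2}(x)$ exactly as in the final step of the previous section, using \eqref{auxineqone} and \eqref{auxexp} together with the $(1+|x|)^{-8-\delta}$ decay of elements of $\mathcal{A}$), and that $Z\mapsto Z$ is bounded $\mathcal{X}\to\mathcal{X}$ (trivial). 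The remaining, genuinely nonlinear, contribution
\begin{equation*}
\mathcal{N}(Z)(\cdot,t) \; := \; \int_{0}^{t} e^{(t-s)\Delta} h(Z(\cdot,s),s)\,ds
\end{equation*}
splits further, using the explicit form of $h$, into a quadratic piece
\begin{equation*}
\mathcal{N}_{2}(Z_{1},Z_{2})(\cdot,t) \; = \; b\int_{0}^{t} e^{-as} e^{(t-s)\Delta}\!\left(\tfrac{1}{2}(Z_{1}Z_{2}+Z_{2}Z_{1})-\tfrac{1}{3}\mathrm{tr}(Z_{1}Z_{2})I\right)\!ds
\end{equation*}
and a cubic piece built analogously from the symmetrisation of $(Z_{1},Z_{2},Z_{3})\mapsto\mathrm{tr}(Z_{1}Z_{2})Z_{3}$ with weight $-c\,e^{-2as}$. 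Both are manifestly symmetric multilinear, and $\mathcal{N}(Z)=\mathcal{N}_{2}(Z,Z)+\mathcal{N}_{3}(Z,Z,Z)$.

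It then suffices to verify that $\mathcal{N}_{2}:\mathcal{X}\times\mathcal{X}\to\mathcal{X}$ and $\mathcal{N}_{3}:\mathcal{X}\times\mathcal{X}\times\mathcal{X}\to\mathcal{X}$ are bounded multilinear forms. The estimates required are essentially the same ones that appeared in the proof that $F_{2}$ is locally Lipschitz. Specifically, the pointwise bound
\begin{equation*}
\bigl|h(Z_{1},s)-h(Z_{2},s)\bigr|\lesssim e^{-as}(1+|x|/\sqrt{s+1})^{-8-\delta}(s+1)^{-3}\|(Z_{1},Z_{2})\|_{\mathcal{X}}^{*}
\end{equation*}
used in \eqref{hestimate} is replaced by its multilinearised analogue, and one propagates the weighted pointwise control through the heat semigroup by the same $\mathcal{R}_{1}(x)/\mathcal{R}_{2}(x)$ splitting, combined with proposition \ref{maeeresult} for the long-time regime to obtain the correct $(t+1)^{-3/2}$ temporal decay in $\mathcal{X}$. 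Since for a continuous symmetric $k$-linear map $B$ the associated homogeneous polynomial $Z\mapsto B(Z,\dots,Z)$ satisfies $\|B(Z,\dots,Z)\|\le\|B\|\,\|Z\|^{k}$ and is automatically entire (its Taylor series around any point has only finitely many nonzero terms), this immediately yields the real-analyticity of $\mathcal{N}$, and hence of $\mathcal{F}_{0}$, on all of $\mathcal{X}\times\mathcal{A}$.

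The main obstacle is purely computational: pushing the weighted pointwise bounds through the Duhamel integral to land back in $\mathcal{X}$ with the correct $(1+|x|/\sqrt{t+1})^{-4-\delta/2}(t+1)^{-3/2}$ profile. However, these estimates are direct repetitions of those already carried out in the locally-Lipschitz proof, merely applied to the multilinearisations of $h$ rather than to its difference along a secant, so no genuinely new analysis is needed — one simply observes that the structure of $h$ as a polynomial of total degree $3$ in its first argument, together with the integrability of $(1+|y|)^{-3-\delta/2}$ in three dimensions, closes every estimate.
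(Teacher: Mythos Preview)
Your proposal is correct and takes essentially the same approach as the paper: both exploit that $h$ is a cubic polynomial in its first argument, so $\mathcal{F}_{0}$ is a polynomial map between Banach spaces and hence real-analytic. The paper phrases this by computing the Fr\'echet derivatives and observing that $d^{k}\mathcal{F}_{0}\equiv 0$ for $k\geq 4$, while you phrase it via the explicit symmetric multilinear decomposition; these are the same argument. One small remark: for landing the Duhamel term in $\mathcal{X}$ with its $(t+1)^{-3/2}$ weight you do not actually need proposition~\ref{maeeresult} --- the straightforward $L^{1}\!\to\! L^{\infty}$ heat smoothing on $[0,t/2]$ together with the $\mathcal{R}_{1}(x)/\mathcal{R}_{2}(x)$ splitting on $[t/2,t]$ already suffices, since no cancellation of the leading $\Phi_{1}$-profile is required here.
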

\begin{proof}
By considering the difference 
\begin{equation*}
\mathcal{F}_{0}[(R, \rho)+(\Xi, \xi)]-\mathcal{F}_{0}[(R, \rho)],
\end{equation*}
one may check that the first Fr\'{e}chet derivative $d\mathcal{F}_{0}$ of $\mathcal{F}_{0}$ at $(R, \rho)\in\mathcal{X}\times\mathcal{A}$ satisfies
\begin{align*}\label{homeomorph}
(d\mathcal{F}_{0}[(R, \rho)])(\Xi, \xi)=e^{t\Delta}\xi+ b\int_{0}^{t}e^{(t-s)\Delta}e^{-as}\left(R\Xi+\Xi R-\frac{2}{3}\mathrm{tr}\left(R\Xi\right)I\right)\,ds\notag \\ -\, c\int_{0}^{t}e^{(t-s)\Delta}e^{-2as}\left(2\,\mathrm{tr}\left(R\Xi\right)R+\mathrm{tr}\left(R^{2}\right)\Xi\right)\,ds - \Xi(\cdot, t),
\end{align*}
for all $(\Xi, \xi)\in\mathcal{X}\times\mathcal{A}$. Owing to the fact that the nonlinearity $h$ in the operator $\mathcal{F}_{0}$ is of cubic order, the higher Fr\'{e}chet derivatives satisfy
\begin{equation*}
(d^{k}\mathcal{F}[(R, \rho)])\left((\Xi_{1}, \xi_{1}), ..., (\Xi_{k}, \xi_{k})\right)=0 \quad \text{in}\quad\mathcal{X},
\end{equation*}
for $(R, \rho), (\Xi_{1}, \xi_{1}) ..., (\Xi_{k}, \xi_{k})\in\mathcal{X}\times\mathcal{A}$ and $k\geq 4$. Furthermore, for constants $C, K$ and $r$ depending on the point $(R, \rho)$, the estimates
\begin{equation*}
\|d^{k}\mathcal{F}_{0}[(Z, z)]\|\leq \frac{Ck!}{K^{k}}, \quad \text{whenever}\quad \|(Z, z)-(R, \rho)\|_{\mathcal{X}\times\mathcal{A}}<r
\end{equation*}
hold for $0\leq k < 4$, where norms are taken in the appropriate space of multilinear operators. By triviality of the higher Fr\'{e}chet derivatives,
the map $\mathcal{F}_{0}$ is real-analytic on $\mathcal{X}\times\mathcal{A}$.
\end{proof}
Having established that the map $\mathcal{F}_{0}:\mathcal{X}\times\mathcal{A}\rightarrow\mathcal{X}$ is analytic, this result aids us in establishing the useful fact that the solution map $\mathcal{F}: Q_{0}\mapsto Q$ from $\mathcal{A}^{\circ}$ to $\mathcal{X}$ is itself a real-analytic map. 
\begin{thm}\label{analy}
The solution map $\mathcal{F}:\mathcal{A}^{\circ}\rightarrow\mathcal{X}$ is real-analytic.
\end{thm}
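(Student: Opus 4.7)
\medskip
\noindent\textbf{Proof plan.} The strategy is to deduce real-analyticity of $\mathcal{F}$ from that of $\mathcal{F}_0$ via the analytic implicit function theorem on Banach spaces (see \textsc{Zeidler} \cite{MR816732}). Observe that if $R := \mathcal{F}(Q_0)$ denotes the Duhamel solution of the transformed equation \eqref{req} starting from $Q_0$, then by construction $\mathcal{F}_0(R, Q_0) = 0$ in $\mathcal{X}$. Thus the solution map is implicitly characterised as the zero level set of the already-analytic operator $\mathcal{F}_0$, and its real-analyticity will follow once we show that the partial Fr\'echet derivative $\partial_Z \mathcal{F}_0$ at each $(\mathcal{F}(Q_0), Q_0)$ is a linear homeomorphism of $\mathcal{X}$.

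From the formula for $d\mathcal{F}_0$ already recorded, $\partial_Z\mathcal{F}_0[(R, \rho)](\Xi) = -\Xi + \mathcal{K}_R[\Xi]$, where $\mathcal{K}_R$ denotes the Duhamel-type integral operator whose kernel depends linearly and quadratically on $R$, modulated by the decay factors $e^{-as}$ and $e^{-2as}$. Invertibility of this derivative reduces to showing $\|\mathcal{K}_R\|_{\mathcal{X}\to\mathcal{X}} < 1$, after which a Neumann series furnishes the bounded inverse. Here we exploit the smallness delivered by theorem \ref{thm1}: for $Q_0 \in \mathcal{A}^{\circ}$ the corresponding $R = A\Phi_1 + V$ inherits the bound $\|R\|_{\mathcal{X}} \leq C\varepsilon_0$, which can be made arbitrarily small by shrinking $\eta$ in the definition of $\mathcal{A}^{\circ}$.

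The main technical step is therefore the operator estimate for $\mathcal{K}_R$ on $\mathcal{X}$. This is of the same flavour as the analysis of the difference $F_2(A, V) - F_2(B, W)$ performed in section \ref{sectionthree}: for each fixed $x$ one splits the spatial integral across the regions $\mathcal{R}_1(x)$ and $\mathcal{R}_2(x)$, uses the pointwise inequalities \eqref{auxineqone} and \eqref{auxexp} to extract the spatial weight $(1+|x|/\sqrt{t+1})^{-4-\delta/2}$, and then integrates in time against $e^{-as}$ and $e^{-2as}$ to produce the temporal weight $(t+1)^{-3/2}$ characteristic of $\mathcal{X}$. The outcome is an estimate of the form $\|\mathcal{K}_R \Xi\|_{\mathcal{X}} \leq C(\|R\|_{\mathcal{X}} + \|R\|_{\mathcal{X}}^2)\|\Xi\|_{\mathcal{X}}$, whence $\|\mathcal{K}_R\|_{\mathcal{X}\to\mathcal{X}}$ can be made no greater than $1/2$ after reducing $\eta$ if necessary. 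This estimate is the principal obstacle, but essentially all the machinery needed to carry it out is already in place from section \ref{sectionthree}.

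With the isomorphism property secured, the analytic implicit function theorem supplies, in a neighbourhood $\mathcal{U}\subset\mathcal{A}^{\circ}$ of each $Q_0$, a unique real-analytic map $\Psi:\mathcal{U}\to\mathcal{X}$ satisfying $\mathcal{F}_0(\Psi(z), z) = 0$ and $\Psi(Q_0) = \mathcal{F}(Q_0)$. Since proposition \ref{globexpro} guarantees at most one classical solution of the Q-tensor equation for each initial datum in $H$, we necessarily have $\Psi = \mathcal{F}|_{\mathcal{U}}$, and hence $\mathcal{F}$ is real-analytic throughout $\mathcal{A}^{\circ}$.
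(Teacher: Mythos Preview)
Your proposal is correct and follows essentially the same route as the paper: characterise the solution map implicitly via $\mathcal{F}_{0}(\mathcal{F}(Q_{0}),Q_{0})=0$, show that $\partial_{Z}\mathcal{F}_{0}$ at a solution pair is $-\mathrm{Id}$ plus a small perturbation and hence invertible by Neumann series, then apply the analytic implicit function theorem and identify the resulting local map with $\mathcal{F}$ by uniqueness. The paper records exactly this argument, with the smallness $\|d_{Z}\mathcal{F}_{0}-J\|<1$ asserted via the $\|\cdot\|_{\mathcal{X}}$-smallness of solutions emanating from $\mathcal{A}^{\circ}$; your additional remarks on how the $\mathcal{R}_{1}(x)/\mathcal{R}_{2}(x)$ splitting would furnish the operator bound are a welcome elaboration of a step the paper leaves to the reader.
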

\begin{proof}
In what follows, we call $(R, \rho)\in \mathcal{X}\times\mathcal{A}$ a \emph{solution pair} if and only if $R\in \mathcal{X}$ is the unique solution of \eqref{qteneq} corresponding to the initial datum $\rho\in\mathcal{A}$. We denote the corresponding \emph{solution map} which yields $R$ from $\rho$ by $\mathcal{F}$. We make the important observation that for any point $(R, \rho)\in\mathcal{X}\times \mathcal{A}$,
\begin{equation}\label{usefulf}
\mathcal{F}_{0}[(R, \rho)]=0\qquad \text{if and only if} \qquad (R, \rho) \,\,\text{is a solution pair.}
\end{equation}
Suppose $(R, \rho)$ chosen from the open subset $\mathcal{X}\times\mathcal{A}^{\circ}\subset\mathcal{X}\times \mathcal{A}$ is a solution pair. We claim that the partial derivative $d_{Z}\mathcal{F}_{0}[(R, \rho)]\in\mathcal{L}(\mathcal{X})$ is a homeomorphism. 

We prove the claim as follows. As the partial Fr\'{e}chet derivative $d_{Z}\mathcal{F}_{0}$ satisfies
\begin{align*}
(d_{Z}\mathcal{F}_{0}[(R, \rho)])(\Xi)= b\int_{0}^{t}e^{(t-s)\Delta}e^{-as}\left(R\Xi+\Xi R-\frac{2}{3}\mathrm{tr}\left(R\Xi\right)I\right)\,ds\\ -\, c\int_{0}^{t}e^{(t-s)\Delta}e^{-2as}\left(2\,\mathrm{tr}\left(R\Xi\right)R+\mathrm{tr}\left(R^{2}\right)\Xi\right)\,ds - \Xi(\cdot, t)
\end{align*}
for all $\Xi\in\mathcal{X}$, one may then verify that the inequality
\begin{equation*}
\| d_{Z}\mathcal{F}_{0}[(R, \rho)]- J\|_{L(\mathcal{X})}<1
\end{equation*}
holds by `smallness' in the $\|\cdot\|_{\mathcal{X}}$-norm of those solutions of \eqref{req} starting from $\mathcal{A}^{\circ}$ initial data. Furthermore, the map $J:\mathcal{X}\rightarrow \mathcal{X}$ given by $J\Xi:=-\Xi$ clearly has unit norm in $\mathcal{L}(\mathcal{X})$. It then follows from the theory of Neumann series that $d_{Z}\mathcal{F}_{0}[(R, \rho)]\in L(\mathcal{X})$ is a homeomorphism. Thus, by the analytic implicit function theorem, 
one may infer the existence of an open neighbourhood $\mathcal{V}\subset\mathcal{A}^{\circ}$ of $\rho$, an open neighbourhood $\mathcal{U}\subset\mathcal{X}\times\mathcal{A}^{\circ}$ of the point $(R, \rho)$ and a real-analytic map $\phi: \mathcal{V}\rightarrow\mathcal{X}$ with the property that
\begin{equation*}
\left\{(\phi(\rho), \rho)\,:\,\rho\in\mathcal{V}\right\}=\mathcal{F}_{0}^{-1}(0)\cap\mathcal{U}.
\end{equation*}
By observation \eqref{usefulf}, it is clear that the set $\mathcal{F}_{0}^{-1}(0)$ is simply that of all solution pairs $(\mathcal{F}\rho, \rho)$ such that $\rho\in\mathcal{A}$. Therefore, the map $\phi:\mathcal{V}\rightarrow\mathcal{X}$ is identically equal to the restriction $\mathcal{F}|_{\mathcal{V}}$ of the solution operator to the set $\mathcal{V}\subset\mathcal{A}^{\circ}$.

As the point $(R, \rho)$ was arbitrarily chosen, it then follows that the solution operator $\mathcal{F}:\mathcal{A}^{\circ}\rightarrow\mathcal{X}$ is real-analytic. 
\end{proof}
Now that we have established the basic result that the solution operator $\mathcal{F}:\mathcal{A}^{\circ}\rightarrow\mathcal{X}$ is real-analytic, we define some other relevant operators. We define $\mathcal{G}:\mathcal{A}^{\circ}\rightarrow\mathcal{A}^{\circ}\times\mathcal{X}$ by
\begin{equation*}
\mathcal{G}Q_{0}:=\left(Q_{0}, \mathcal{F}Q_{0}\right)\qquad\text{for all} \qquad Q_{0}\in\mathcal{A}^{\circ}.
\end{equation*}
By virtue of the last theorem, it is clear that $\mathcal{G}$ is real-analytic on $\mathcal{A}^{\circ}$. Moreover, by a routine calculation, one may also verify that the map $\mathcal{H}:\mathcal{A}^{\circ}\times \mathcal{X}\rightarrow \mathrm{Sym}_{0}(3)$ defined by 
\begin{equation*}
\mathcal{H}(z, Z):=\int_{\mathbb{R}^{3}}z(y)\,dy+\int_{0}^{\infty}\int_{\mathbb{R}^{3}}h(Z(y, \tau), \tau)\,dyd\tau
\end{equation*}
is also real analytic. With these facts in mind, one may view the right-hand side of the equality
\begin{equation*}
A=\int_{\mathbb{R}^{3}}Q_{0}(y)\,dy + \int_{0}^{\infty}\int_{\mathbb{R}^{3}}h(Q(y, \tau), \tau)\,dyd\tau
\end{equation*}
as a composition of analytic maps taking $\mathcal{A}^{\circ}$ to $\symnn$, namely
\begin{equation*}
A=(\mathcal{H}\circ\mathcal{G})Q_{0}.
\end{equation*} 
As compositions of real analytic maps are themselves real analytic (see \textsc{Buffoni and Toland} \cite{buffoni1}, theorem 4.5.7),
we deduce that the map $\mathcal{H}\circ\mathcal{G}:Q_{0}\mapsto A$ is real analytic on $\mathcal{A}^{\circ}$. 

We need only now show that the map $\mathcal{H}\circ\mathcal{G}$ is non-zero on $\mathcal{A}^{\circ}$. To this end, we provide a non-empty set of initial data contained in $\mathcal{A}^{\circ}$ for which the matrix $A$ in the decomposition \eqref{decomposition} is non-zero. Together with the fact that the coefficient matrix $A$ depends in a continuous manner on initial data, we have the existence of an open set of initial data for which $A\neq 0$.

\begin{prop}
For $\alpha\geq 0$, let $\lambda_{\alpha}:\mathbb{R}^{3}\rightarrow \mathbb{R}$ denote the map
\begin{equation}\label{lambdalpha}
\lambda_{\alpha}(x):=-\frac{\alpha}{(1+|x|)^{8+\delta}},
\end{equation}
and set $Q_{0, \alpha}:=\mathrm{diag}(\lambda_{\alpha}, \lambda_{\alpha}, -2\lambda_{\alpha})$. There exists $\alpha_{0}>0$ such that $A\neq 0$ in the decomposition \eqref{decomposition} for all $0<\alpha<\alpha_{0}$.
\end{prop}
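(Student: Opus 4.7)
My plan is to expand $A_\alpha := (\mathcal{H}\circ\mathcal{G})(Q_{0,\alpha})$ in powers of $\alpha$ using the already-established real-analyticity of $\mathcal{H}\circ\mathcal{G}$ on $\mathcal{A}^\circ$, identify the leading Taylor coefficient explicitly as a non-zero matrix, and deduce $A_\alpha\neq 0$ for small positive $\alpha$ by dominating the higher-order remainder.

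First I would verify admissibility. A direct computation gives $|Q_{0,\alpha}(x)| = \sqrt{6}\,\alpha\,(1+|x|)^{-8-\delta}$, whence $\|Q_{0,\alpha}\|_\mathcal{A} = \sqrt{6}\,\alpha$, so $Q_{0,\alpha}\in\mathcal{A}^\circ$ exactly when $\alpha < \eta/\sqrt{6}$. On this range the decomposition in Theorem~\ref{thm1} applies and $A_\alpha$ is well defined.

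Next I would compute the Fréchet derivative of $\mathcal{H}\circ\mathcal{G}$ at the origin. Using the chain rule together with the explicit formulas $\mathcal{G}(Q_0)=(Q_0, \mathcal{F}Q_0)$ and $\mathcal{H}(z,Z) = \int_{\mathbb{R}^3}z + \int_0^\infty\!\int_{\mathbb{R}^3}h(Z,\tau)\,dyd\tau$, and noting the crucial fact that $h(R,\tau)$ vanishes to second order in $R$ at $R=0$ (so $dh|_0=0$), one finds
\[
d(\mathcal{H}\circ\mathcal{G})|_0(\xi) = \int_{\mathbb{R}^3}\xi(y)\,dy,
\]
for any direction $\xi\in\mathcal{A}$; contributions from the nonlinear term drop out at first order. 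Because $\mathcal{H}\circ\mathcal{G}$ is real-analytic with $(\mathcal{H}\circ\mathcal{G})(0)=0$, Taylor expansion along the curve $\alpha\mapsto \alpha Q_{0,1}$ then gives
\[
A_\alpha = \alpha\int_{\mathbb{R}^3}Q_{0,1}(y)\,dy + O(\alpha^2) = -\alpha M\,\mathrm{diag}(1,1,-2) + O(\alpha^2)
\]
as $\alpha\to 0^+$, where $M := \int_{\mathbb{R}^3}(1+|y|)^{-8-\delta}\,dy \in (0,\infty)$ by the choice of $\delta > 0$.

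Since $\mathrm{diag}(1,1,-2)\in\symnn\setminus\{0\}$ has Frobenius norm $\sqrt{6}$, choosing $\alpha_0 > 0$ small enough that the $O(\alpha^2)$ remainder is bounded by $\tfrac{1}{2}\sqrt{6}\,M\alpha$ produces $|A_\alpha|\geq \tfrac{1}{2}\sqrt{6}\,M\alpha > 0$ for all $\alpha\in(0,\alpha_0)$, which yields the claim. I do not foresee any serious obstacle: the heavy lifting has already been carried out in Theorem~\ref{analy}, and the only two remaining ingredients are the routine verification that $dh|_0 = 0$ and the elementary computation of $\int Q_{0,1}$. As a direct alternative (avoiding any appeal to analyticity), one could retrace the contraction mapping argument of Theorem~\ref{thm1} to show $\|(A_\alpha,V_\alpha)\|_X\lesssim\|Q_{0,\alpha}\|_\mathcal{A}=\sqrt{6}\,\alpha$, and then bound the double integral of $h(Q_\alpha,\tau)$ directly by $O(\alpha^2)$ using $|h(R,\tau)|\leq C(e^{-a\tau}|R|^2 + e^{-2a\tau}|R|^3)$ together with a standard Gaussian-type space-time estimate; the only delicacy there is to track the \emph{linear} (rather than merely bounded) scaling in $\alpha$, which is automatic along the analytic route.
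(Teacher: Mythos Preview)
Your argument is correct, but it proceeds along a genuinely different line from the paper's own proof. The paper does \emph{not} invoke the analyticity of $\mathcal{H}\circ\mathcal{G}$ here; instead it reduces to the scalar equation for the uniaxial eigenvalue $\lambda$, makes the change of variable $\nu=e^{at}\lambda$, and uses a sign/comparison argument: for $\alpha$ small enough the full nonlinearity $-b e^{-at}\nu^{2}-6c\,e^{-2at}\nu^{3}$ is nonpositive (since $\nu$ stays uniformly small and negative), and integrating the Duhamel formula in space yields $\int_{\mathbb{R}^{3}}\nu(x,t)\,dx\leq\int_{\mathbb{R}^{3}}\lambda_{0}<0$ for all $t$. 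This is incompatible with $A=0$, because in that case $e^{at}Q=V\in X_{0}$ would force $\int_{\mathbb{R}^{3}}\nu(\cdot,t)\to 0$.

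Your route is arguably cleaner given the logical order of the paper: since analyticity of $\mathcal{H}\circ\mathcal{G}$ has already been established, the one-variable Taylor expansion of $\alpha\mapsto A_{\alpha}$ at $0$ with leading term $\alpha\int Q_{0,1}=-\alpha M\,\mathrm{diag}(1,1,-2)$ settles the matter immediately, and the only substantive observation is $dh|_{0}=0$. What the paper's approach buys is independence from the analytic machinery---it is a self-contained PDE argument relying only on the decomposition theorem and elementary comparison, and it yields the slightly sharper quantitative fact that $\int e^{at}\lambda$ remains uniformly bounded away from zero. What your approach buys is economy and transparency: it makes explicit that the coefficient $A$ agrees to leading order with $\int Q_{0}$, which is exactly the intuition behind the formula $A=\int Q_{0}+\int_{0}^{\infty}\!\int h$. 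Your sketched alternative (bounding the nonlinear double integral by $O(\alpha^{2})$ directly from the contraction estimates) is essentially a de-analytified version of the same idea and would also go through.
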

\begin{proof}
By uniqueness of solutions of the gradient flow \eqref{qteneq}, it follows that solutions starting from uniaxial data 
\begin{displaymath}
Q_{0}(x)=\left(
\begin{array}{ccc}
\lambda_{0}(x) & 0 & 0\\
0 & \lambda_{0}(x) & 0 \\
0 & 0 & -2\lambda_{0}(x)
\end{array}
\right)
\end{displaymath} remain of uniaxial form 
\begin{displaymath}
Q(x, t)=\left(
\begin{array}{ccc}
\lambda(x, t) & 0 & 0\\
0 & \lambda(x, t) & 0 \\
0 & 0 & -2\lambda(x, t)
\end{array}
\right)
\end{displaymath} 
for $t>0$, where $\lambda(x, t)$ satisfies the scalar nonlinear heat equation
\begin{equation}\label{lambdaheat}
\frac{\partial\lambda}{\partial t}=\Delta \lambda - a\,\lambda-b\,\lambda^{2}-6c\,\lambda^{3}
\end{equation}
on $\mathbb{R}^{3}\times (0, \infty)$. Suppose that we consider solutions of \eqref{lambdaheat} starting from initial data of the form \eqref{lambdalpha}. By \textsc{Theorem} \ref{thm1}, there exists $\overline{\alpha}>0$ such that if $|\lambda_{0}(x)|<\alpha<\overline{\alpha}$, then
\begin{equation*}
|\lambda(x, t)|\leq C\alpha \quad \text{on} \quad \mathbb{R}^{3}\times (0, \infty),
\end{equation*}
for some constant $C>0$ depending only on $(a, b, c)\in \mathcal{D}$.

Let us make the transformation $\nu(x, t):=e^{at}\lambda(x, t)$. Now, one can find $\alpha_{0}>0$ small enough satisfying $\overline{\alpha}\geq\alpha_{0}>0$ such that
\begin{equation*}
-b\,e^{-at}\left(\nu(x, t)\right)^{2}-c\,e^{-2at}\left(\nu(x, t)\right)^{3}\leq 0 \quad \text{on} \quad \mathbb{R}^{3}\times (0, \infty).
\end{equation*} 
Furthermore, since $\nu$ can easily be shown to satisfy the equation
\begin{equation*}
\nu(\cdot, t)=e^{t\Delta}\lambda_{0}-\int_{0}^{t}e^{(t-s)\Delta}\left(b\,e^{-as}\left(\nu(\cdot, s)\right)^{2}+6c\,e^{-2as}\left(\nu(\cdot, s)\right)^{3}\right)\,ds,
\end{equation*}
by integrating across over space, we deduce that
\begin{equation*}
\int_{\mathbb{R}^{3}}\nu(x, t)\,dx\leq\int_{\mathbb{R}^{3}}\lambda_{0}(x, t)\,dx<0,
\end{equation*}
for all $t>0$. Thus, the $L^{1}(\mathbb{R}^{3})$-norm of the original solutions $\lambda$ cannot decay to $0$ as $t\rightarrow\infty$, and so for the set of initial data
\begin{equation*}
\left\{\mathrm{diag}(\lambda_{0}, \lambda_{0}, -2\lambda_{0})\,:\, \lambda_{0}(x)=-\frac{\alpha}{(1+|x|)^{8+\delta}}\quad \text{for}\quad 0<\alpha<\alpha_{0}\right\}\subset\mathcal{A}^{\circ},
\end{equation*}
the corresponding matrix $A$ in the solution decomposition \eqref{decomposition} cannot be zero.  
\end{proof}
To complete our demonstration that $\mathcal{H}\circ\mathcal{G}$ is not identically the zero map on $\mathcal{A}^{\circ}$, we argue as follows. For $\alpha$ chosen from the interval $(0, \alpha_{0})$ determined from the above proposition, we denote the coefficient matrix in the decomposition formula \eqref{decomposition} corresponding to the solution with uniaxial initial datum $Q_{0, \alpha}=\mathrm{diag}(\lambda_{\alpha}, \lambda_{\alpha}, -2\lambda_{\alpha})$ by $A(\alpha)$. Since $A(\alpha)\neq 0$, there exists an open ball of matrices $\mathbb{B}(\alpha)\subset\mathrm{Sym}_{0}(3)$ with centre $A(\alpha)$ that does not contain the zero matrix. Since $\mathcal{H}\circ\mathcal{G}$ is analytic, it follows that $(\mathcal{H}\circ\mathcal{G})^{-1}(\mathbb{B}(\alpha))$ is an open set of initial data in $\mathcal{A}^{\circ}$. From this we conclude that $\mathcal{H}\circ\mathcal{G}$ cannot be the zero map on $\mathcal{A}^{\circ}$, from which it follows that the set of all those $Q_{0}$ in $\mathcal{A}^{\circ}$ which yield $A=0$ in \eqref{decomp} is closed and contains no open ball.
\begin{rem}\label{bremark}
We subsequently denote the set of all maps in $\mathcal{A}^{\circ}$ for which the coefficient matrix $A$ is 0 by $\mathcal{B}$.
\end{rem}

\subsection{Asymptotic Behaviour of $c_{\mu_{0}}(r, t)$ as $t\rightarrow\infty$}\label{whatdaf}
As discussed in the introduction, we express the notion of averaging over initial conditions in a rigorous manner by evaluating the correlation function with respect to a suitable time-dependent family of Borel probability measures. We construct statistical solutions of \eqref{qteneq} in the following proposition, whose proof is a modification of a construction contained in \textsc{Foias, Manley, Rosa and Temam} \cite{MR1855030}. We quickly recall the definition a Dirac delta measure on the space $L^{2}(\mathbb{R}^{3})$.
\begin{defn}
Let $\mathcal{P}(L^{2}(\mathbb{R}^{3}))$ denote the power set of $L^{2}(\mathbb{R}^{3})$. For any given $Q\in L^{2}(\mathbb{R}^{3})$, the associated map $\delta_{Q}:\mathcal{P}(L^{2}(\mathbb{R}^{3}))\rightarrow \{0, 1\}$ defined by
\begin{displaymath}
\delta_{Q}(A):=\left\{
\begin{array}{ll}
1 & \quad \text{if} \quad Q\in A, \\ & \\ 0 & \quad \text{otherwise},
\end{array}
\right.
\end{displaymath}
is the \emph{Dirac delta measure} concentrated at $Q\in L^{2}(\mathbb{R}^{3})$.
\end{defn}
\begin{prop}
Suppose $0<\gamma < \infty$. For any given Borel probability measure $\overline{\mu}$ supported on the set
\begin{equation*}
\left\{Q\in L^{2}(\mathbb{R}^{3})\,:\, \|Q\|_{2}\leq \gamma \right\}\cap H,
\end{equation*} 
there exists a corresponding one-parameter family of Borel probability measures $\{\mu_{t}\}_{t\geq 0}$ and an approximating sequence of families
\begin{equation*}
\left\{\{\mu_{t}^{(k)}\}_{t\geq 0}\,:\, k=1, 2, 3, ...\right\} \quad \text{with} \quad \mu_{t}^{(k)}:=\sum_{j=1}^{N(k)}\vartheta_{j}^{(k)}\delta_{S(t)\overline{Q}_{j}^{(k)}}
\end{equation*} 
for some $N(k)\in\mathbb{N}$ and $\vartheta_{j}^{(k)}\in (0, 1]$, such that
\begin{equation}\label{meas1}
\mu_{t}(E)=\lim_{k\rightarrow\infty}\mu_{t}^{(k)}(E)
\end{equation}
for all $t\geq 0$ and all $\mu_{t}$-measurable subsets $E\subseteq \{Q\in L^{2}(\mathbb{R}^{3})\,:\, \|Q\|_{2}\leq \gamma \}\cap H$. For each $k\geq 1$, the coefficients $\vartheta_{j}^{(k)}\geq 0$ respect the sum $\sum_{j}\vartheta_{j}^{(k)}=1$, and $S(t)\overline{Q}_{j}^{(k)}$ denotes the action of the semigroup of proposition \ref{globexpro} on initial data $\overline{Q}_{k, j}$.
\end{prop}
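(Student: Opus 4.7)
The plan is to construct $\{\mu_t\}_{t \geq 0}$ as the pushforward of $\overline{\mu}$ by the semigroup $\{S(t)\}_{t \geq 0}$ of Proposition \ref{globexpro}, and to realise this pushforward as the limit of a sequence of finitely-supported approximations to $\overline{\mu}$ advected by the same semigroup. This follows the Foias--Manley--Rosa--Temam scheme \cite{MR1855030} with modifications dictated by the fact that our phase space is $H = L^{2}(\mathbb{R}^{3}) \cap L^{6}(\mathbb{R}^{3})$ rather than a function space tailored to Navier--Stokes.

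First, I would produce the atomic approximations $\overline{\mu}^{(k)}$ of $\overline{\mu}$. Since $\{Q \in L^{2}(\mathbb{R}^{3}) : \|Q\|_{2} \leq \gamma\} \cap H$ is a Borel subset of the separable metric space $H$, for each $k \geq 1$ one may cover this set by a countable disjoint family of Borel sets $\{B_{j}^{(k)}\}_{j \geq 1}$ each of $H$-diameter at most $1/k$. Tightness of $\overline{\mu}$ allows one to truncate to a finite subfamily $\{B_{j}^{(k)}\}_{j=1}^{N(k)}$ whose union carries $\overline{\mu}$-mass at least $1 - 1/k$. Choosing a representative $\overline{Q}_{j}^{(k)} \in B_{j}^{(k)} \cap \mathrm{supp}(\overline{\mu})$ in each class and setting $\vartheta_{j}^{(k)} := \overline{\mu}(B_{j}^{(k)})$ (rescaled so that $\sum_{j} \vartheta_{j}^{(k)} = 1$), one produces a finite convex combination $\overline{\mu}^{(k)} = \sum_{j=1}^{N(k)} \vartheta_{j}^{(k)} \delta_{\overline{Q}_{j}^{(k)}}$ that converges to $\overline{\mu}$ in the weak-$\ast$ topology of Borel probability measures on $H$.

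Second, I would define $\mu_{t} := (S(t))_{\ast}\overline{\mu}$ as the pushforward of $\overline{\mu}$ under $S(t)$, and correspondingly $\mu_{t}^{(k)} := (S(t))_{\ast}\overline{\mu}^{(k)} = \sum_{j=1}^{N(k)} \vartheta_{j}^{(k)} \delta_{S(t)\overline{Q}_{j}^{(k)}}$. Because Proposition \ref{globexpro} supplies a strongly continuous semigroup $S(t) : H \to H$, pushforward along $S(t)$ maps Borel probability measures to Borel probability measures, and for each fixed $t \geq 0$ the continuity of $S(t)$ ensures that the weak-$\ast$ convergence $\overline{\mu}^{(k)} \to \overline{\mu}$ is preserved upon pushforward: indeed, for any bounded continuous $\phi : H \to \mathbb{R}$, the composition $\phi \circ S(t)$ is again bounded continuous on $H$, so $\int \phi\,d\mu_{t}^{(k)} = \int \phi \circ S(t)\,d\overline{\mu}^{(k)} \to \int \phi \circ S(t)\,d\overline{\mu} = \int \phi\,d\mu_{t}$. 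By the Portmanteau theorem, this weak-$\ast$ convergence promotes to $\mu_{t}^{(k)}(E) \to \mu_{t}(E)$ on every $\mu_{t}$-continuity set $E$, which is the sense in which the identity \eqref{meas1} should be understood.

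The one delicate point I expect is the construction of the initial atomic approximations while keeping the representatives $\overline{Q}_{j}^{(k)}$ inside $H$ (not merely inside the $L^{2}$-ball), which is necessary to make sense of $S(t)\overline{Q}_{j}^{(k)}$. This is why the partition above must be performed in the metric $\|\cdot\| = \max\{\|\cdot\|_{2}, \|\cdot\|_{6}\}$ of $H$, using the hypothesis that $\mathrm{supp}(\overline{\mu}) \subset H$; separability of $H$ (a consequence of separability of $L^{2}$ and $L^{6}$ on $\mathbb{R}^{3}$) gives the requisite countable cover. Everything else is bookkeeping: tightness of Borel measures on Polish spaces and the strong continuity of $S(t)$ from Proposition \ref{globexpro} do the remaining work.
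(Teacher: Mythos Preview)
Your argument is correct but follows a genuinely different route from the paper. The paper equips the closed $L^{2}$-ball $K=\{Q:\|Q\|_{2}\leq\gamma\}$ with the \emph{weak} $L^{2}$ topology (so that $K$ is compact and metrisable), invokes the Krein--Milman theorem to produce the atomic approximants of $\overline{\mu}$, and then lifts these to measures on the trajectory space $\sigma_{T}\subset C([0,T];K')$, extracts a weak-$\ast$ limit there by Arzel\`a--Ascoli compactness, and finally projects to the time-$t$ marginals $\mu_{t}$ via the Riesz--Kakutani theorem. You instead work in the strong $H$-norm, build the approximants by a partition/tightness argument on the Polish space $H$, and obtain $\mu_{t}$ directly as the pushforward $(S(t))_{\ast}\overline{\mu}$; the weak-$\ast$ convergence $\mu_{t}^{(k)}\to\mu_{t}$ then follows from the continuity of $Q_{0}\mapsto S(t)Q_{0}$ on $H$ (continuous dependence on initial data, implicit in Proposition~\ref{globexpro}). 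Your approach is more elementary and bypasses the trajectory-space machinery entirely; the paper's approach stays closer to the Foia\c{s}--Manley--Rosa--Temam framework and, as a byproduct, produces a measure on path space, though that extra structure is not used elsewhere here. One small point worth flagging: both arguments ultimately yield weak-$\ast$ convergence, which via Portmanteau gives $\mu_{t}^{(k)}(E)\to\mu_{t}(E)$ only for $\mu_{t}$-continuity sets $E$; you correctly note this caveat, whereas the paper's statement of \eqref{meas1} for ``all $\mu_{t}$-measurable subsets'' is a slight overclaim on either route.
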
 
\begin{proof}
We establish the spaces which we use to construct the family of measures $\{\mu_{t}\}_{t\geq 0}$. We endow the set
\begin{equation*}
K:=\left\{Q\in L^{2}(\mathbb{R}^{3})\,:\,\|Q\|_{2}\leq \gamma\right\}
\end{equation*} 
with the weak topology inherited from $L^{2}(\mathbb{R}^{3})$, with respect to which it is a compact topological space. Furthermore, this space is metrisable with metric $d_{w}$, given by
\begin{equation*}
d_{w}(Q, R):=\sum_{k=1}^{\infty}\frac{1}{2^{k}}\left|\int_{\mathbb{R}^{3}}Q:\chi_{k} - \int_{\mathbb{R}^{3}}R:\chi_{k}\right|,
\end{equation*} 
where $\{\chi_{k}\}_{k=1}^{\infty}$ is a countable dense subset of the set $K\subset L^{2}(\mathbb{R}^{3})$. We write $\textsf{M}_{0}(K)$ to denote the set of all Borel probability measures carried by subsets of $K$, and furnish $\textsf{M}_{0}(K)$ with the subspace weak-$\ast$ topology inherited from $C(K)'$, whence $\textsf{M}_{0}(K)$ is a convex, compact Hausdorff topological space by the Banach-Alaoglu theorem. 

Let $T>0$ be given. The energy estimate 
\begin{equation*}
\frac{1}{2}\frac{d}{dt}\int_{\mathbb{R}^{3}}\mathrm{tr}\left(Q^{2}\right)\leq \left(\frac{b^{2}}{2c}-a\right)\int_{\mathbb{R}^{3}}\mathrm{tr}\left(Q^{2}\right)
\end{equation*}
for solutions of \eqref{qteneq} implies that there exists $\gamma'=\gamma'(T, \gamma)>0$ such that the solution $Q(\cdot, t)$ satisfies
\begin{equation*}
Q(\cdot, t)\in K':=\left\{R\in L^{2}(\mathbb{R}^{3})\, : \, \|R\|_{2}\leq \gamma'\right\},
\end{equation*}
for all $0\leq t\leq T$ whenever $Q(\cdot, 0)=Q_{0}\in K\cap H$. With this in mind, we write $\sigma_{T}$ to denote the set of solution trajectories
\begin{equation*}
\sigma_{T}:=\left\{Q\in C([0, T], K') \,:\, Q(x, t) \hspace{2mm}\text{solves} \hspace{2mm} \eqref{qteneq} \hspace{2mm} \text{and} \hspace{2mm} Q(\cdot, 0)\in K\cap H\right\}.
\end{equation*}
Using suitable properties of solutions of the Q-tensor equation \eqref{qteneq}, by the Arzel\`{a}-Ascoli theorem, $\sigma_{T}$ is a compact topological space with respect to the topology induced from $\left(C([0, T]; K'), d_{\infty}\right)$, where
\begin{equation*}
d_{\infty}(Q_{1}, Q_{2}):=\max_{0\leq t\leq T}d_{w}(Q_{1}(t), Q_{2}(t)).
\end{equation*} 
Similarly, the space $\textsf{M}_{0}(\sigma_{T})$ of all Borel probability measures on $\sigma_{T}$ is also a convex, compact Hausdorff topological space.

Let us now begin our construction. For a given $\overline{\mu}\in\textsf{M}_{0}(K)$ supported on $K\cap H$, by the Krein-Milman theorem we know there exists a sequence of families of Dirac delta measures $\{\delta_{\overline{Q}_{j}^{(k)}}\}_{j=1}^{N(k)}$ for $k=1, 2, 3, ...$ satisfying
\begin{equation*}
\int_{K}\varphi\,d\overline{\mu}=\lim_{k\rightarrow\infty}\sum_{j=1}^{N(k)}\vartheta_{j}^{(k)}\int_{K}\varphi \,d\delta_{\overline{Q}_{j}^{(k)}} \quad \text{for all} \quad \varphi\in C(K).
\end{equation*}
Using this approximating sequence of measures, we define a new sequence of probability measures $\{\mu^{(k)}\}_{k=1}^{\infty}\subset \textsf{M}_{0}(\sigma_{T})$ by
\begin{equation*}
\mu^{(k)}:=\sum_{j=1}^{N(k)}\vartheta_{j}^{(k)}\delta_{Q\left(\cdot\, ; \overline{Q}_{j}^{(k)}\right)},
\end{equation*}
where $Q(\cdot \,; \overline{Q}_{j}^{(k)})$ denotes the solution trajectory starting from the initial datum $\overline{Q}_{j}^{(k)}$. This defines a sequence of elements in $\mathsf{M}_{0}(\sigma_{T})$, so by compactness there exists a measure $\mu\in\textsf{M}_{0}(\sigma_{T})$ to which a (relabeled) subsequence of $\{\mu^{(k)}\}_{k=1}^{\infty}$ converges in the weak-$\ast$ topology. We now focus on this limiting measure $\mu$.

For each fixed $s\in [0, T]$, the map
\begin{equation*}
\varphi\mapsto\int_{\sigma_{T}}\varphi(Q(s))\,d\mu
\end{equation*}
is well defined, positive and linear on $C(K)$, so by the Riesz-Kakutani theorem there exists a Borel probability measure $\mu_{s}$ (depending on the choice of $s\in [0, T]$) satisfying
\begin{equation*}
\int_{\sigma_{T}}\varphi(Q(s))\,d\mu(Q)=\int_{K}\varphi(R)\,d\mu_{s}(R) 
\end{equation*}
for all $\varphi\in C(K)$. By direct computation, one may show for the sequence of approximants $\{\mu^{(k)}\}_{k=1}^{\infty}$ that the equality
\begin{equation*}
\int_{\sigma_{T}}\varphi(Q(s))\,d\mu^{(k)}(Q)=\int_{K}\varphi(R)\,d\mu^{(k)}_{s}(R)
\end{equation*}
holds for all $\varphi\in C(K)$, where
\begin{equation*}
\mu^{(k)}_{s}:=\sum_{j=1}^{N(k)}\vartheta_{j}^{(k)}\delta_{S(s)\overline{Q}_{j}^{(k)}}.
\end{equation*}
Using the fact that $\mu^{k}\rightharpoonup \mu$ weakly-star in $\mathsf{M}_{0}(\sigma_{T})$, we find
\begin{align*}
& \quad \int_{K}\varphi(R)\,d\mu_{t}=\int_{\sigma_{T}}\varphi(Q(t))\,d\mu(Q)\\ = & \quad \lim_{k\rightarrow\infty}\int_{\sigma_{T}}\varphi(Q(t))\,d\mu^{(k)}=\lim_{k\rightarrow\infty}\int_{K}\varphi(R)\,d\mu^{(k)}_{t}(R).
\end{align*}
From this we deduce the result
\begin{equation*}\label{measurings}
\mu_{t}(E)=\lim_{k\rightarrow\infty}\mu^{(k)}_{t}(E),
\end{equation*}
for all $\mu_{t}$-measurable subsets $E\subseteq K$. The proof of the theorem is concluded by noting that $\lim_{t\rightarrow 0}\mu_{t}(E)=\overline{\mu}(E)$ for all $E$ by the continuity of the semigroup $\{S(t)\}_{t\geq 0}$, and also that the transport of the measure $\overline{\mu}$ may be extended globally in time as the semigroup $\{S(t)\}_{t\geq 0}$ is defined globally in time.
\end{proof}
Assembling all that has come before, we now approach the proof of our main result, which follows rather swiftly from previous results.
\begin{thm}\label{mainthm}
For any given $\delta>0$, there exists $\eta>0$ depending only on $\delta$ and the parameters $(a, b, c)\in\mathcal{D}$ such that for any Borel probability measure $\mu_{0}$ supported in the dense open set
\begin{equation*}
\mathcal{A}^{\circ}\setminus\mathcal{B}=\left\{R\in L^{\infty}(\mathbb{R}^{3})\,:\, \underset{x\in\mathbb{R}^{3}}{\mathrm{ess}\sup}\left(1+|x|\right)^{8+\delta}|R(x)|< \eta\right\}\setminus\mathcal{B},
\end{equation*}
where $\mathcal{B}$ was defined in remark \ref{bremark}, the associated correlation function \eqref{corrfun} exhibits asymptotic self-similar behaviour:
\begin{equation*}
\left\|c_{\mu_{0}}(r, t)-e^{-\frac{|r|^{2}}{8t}}\right\|_{L^{\infty}(\mathbb{R}^{3}, \, dr)} = \mathcal{O}(t^{-1/2}) \quad \text{as}\quad t\rightarrow\infty.
\end{equation*}
\end{thm}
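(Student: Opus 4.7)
The plan is to reduce the theorem to the individual-trajectory asymptotic \eqref{corrindiv} established at the start of Section \ref{sectionfour}, via the discrete approximation of $\mu_t$ furnished by the preceding proposition. The approximating measures $\mu_t^{(k)} = \sum_{j=1}^{N(k)}\vartheta_j^{(k)}\delta_{S(t)\overline{Q}_j^{(k)}}$ are push-forwards under $S(t)$ of the atomic measures $\mu_0^{(k)} = \sum_j\vartheta_j^{(k)}\delta_{\overline{Q}_j^{(k)}}$, and since $\mu_0$ is supported in the open set $\mathcal{A}^\circ\setminus\mathcal{B}$, a standard approximation argument on the support allows one to choose $\overline{Q}_j^{(k)}\in\mathcal{A}^\circ\setminus\mathcal{B}$ for every $j$ and $k$. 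Theorem \ref{thm1} therefore applies to each atom, yielding the decomposition $S(t)\overline{Q}_j^{(k)}(x) = A_j^{(k)}e^{-at}\Phi_1(x, t) + e^{-at}V_j^{(k)}(x, t)$ with $A_j^{(k)}\neq 0$ and $\|(A_j^{(k)}, V_j^{(k)})\|_X\leq\varepsilon_0$ uniformly.

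Substituting this decomposition into the correlation integrals and exploiting both the identity $\int_{\mathbb{R}^3}\Phi_1(x+r, t)\Phi_1(x, t)\,dx = (\sqrt{8}(4\pi(t+1))^{3/2})^{-1}e^{-|r|^2/8(t+1)}$ and the $X_0$-decay $\|V_j^{(k)}(\cdot, t)\|_2\leq C(t+1)^{-5/4}$ (which controls the mixed $\Phi_1 V_j^{(k)}$-cross terms and the $V_j^{(k)}V_j^{(k)}$ self-term), one obtains, in direct analogy with \eqref{veryhelpful} and \eqref{alsoveryhelpful},
\begin{align*}
\int_H\int_{\mathbb{R}^3}\mathrm{tr}(Q(x+r)Q(x))\,dx\,d\mu_t^{(k)}(Q) &= \frac{e^{-2at}\,\Sigma_k}{\sqrt{8}(4\pi(t+1))^{3/2}}\bigl(e^{-|r|^2/8(t+1)} + \Omega_1^{(k)}(r, t)\bigr),\\
\int_H\int_{\mathbb{R}^3}\mathrm{tr}(Q(x)^2)\,dx\,d\mu_t^{(k)}(Q) &= \frac{e^{-2at}\,\Sigma_k}{\sqrt{8}(4\pi(t+1))^{3/2}}\bigl(1 + \Omega_2^{(k)}(t)\bigr),
\end{align*}
where $\Sigma_k := \sum_j\vartheta_j^{(k)}\mathrm{tr}((A_j^{(k)})^2)>0$ and the $\mu_0^{(k)}$-averaged errors $\Omega_i^{(k)}$ are bounded by $C t^{-1/2}$ uniformly in $r$ and $k$, by virtue of uniform-in-$(j, k)$ control of the individual errors.

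The common prefactor $e^{-2at}\Sigma_k/(\sqrt{8}(4\pi(t+1))^{3/2})$ cancels on forming the quotient, and the elementary expansion $(1+\tau)^{-1} = 1 + O(|\tau|)$ for $|\tau|\leq 1/2$ then yields
\begin{equation*}
\left|c_{\mu_0^{(k)}}(r, t) - e^{-|r|^2/8(t+1)}\right| \leq C t^{-1/2}
\end{equation*}
uniformly in $r$ and $k$. Combined with $\sup_r|e^{-|r|^2/8(t+1)} - e^{-|r|^2/8t}| = O(t^{-1})$, this delivers the asymptotic for each $\mu_0^{(k)}$. Passing to the limit $k\to\infty$, the uniformity of the error permits the desired bound for $\mu_0$ itself; the convergence $\mu_t^{(k)} \to \mu_t$ from the construction, together with dominated convergence applied to the bounded measurable integrands, identifies the limit of $c_{\mu_0^{(k)}}$ as $c_{\mu_0}$.

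The principal obstacle is the uniformity over $(j, k)$ of the $O(t^{-1/2})$ error estimate: the argument leading to \eqref{corrindiv} traces the implied constant through $\|V\|_{X_0}$ and $\mathrm{tr}(A^2)$, and one must verify that the uniform bounds $\|(A_j^{(k)}, V_j^{(k)})\|_X\leq\varepsilon_0$ supplied by Theorem \ref{thm1} translate into the claimed $(j, k)$-uniform control, in particular that the ratio form of $\omega(t)$ in \eqref{veryhelpful}--\eqref{alsoveryhelpful} does not degenerate as the individual $\mathrm{tr}((A_j^{(k)})^2)$ shrinks (ultimately resolved by the cancellation of the $\Sigma_k$-prefactor). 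A secondary technicality is ensuring that the selected atoms $\overline{Q}_j^{(k)}$ actually lie in $\mathcal{A}^\circ\setminus\mathcal{B}$ and that the convergence $\mu_t^{(k)} \to \mu_t$ is strong enough to justify interchanging the limit with integration of the non-weakly-continuous functionals $Q\mapsto\int\mathrm{tr}(Q(\cdot+r)Q)\,dx$ and $Q\mapsto\|Q\|_2^2$.
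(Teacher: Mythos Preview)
Your proposal follows essentially the same route as the paper's own proof: approximate $\mu_{t}$ by the atomic measures $\mu_{t}^{(k)}$ from the preceding proposition, write $c_{\mu_{0}}$ as the limit of the corresponding ratio of sums, and then reproduce the individual-trajectory computation leading to \eqref{corrindiv} with $\mathrm{tr}(A^{2})$ replaced by the averaged quantity $\Sigma_{k}$. In fact your write-up is more explicit than the paper's, which simply records the limit formula for $c_{\mu_{0}}$ and defers to ``a similar calculation as the one used to achieve \eqref{corrindiv}''; the technical caveats you flag (uniformity in $(j,k)$, selection of atoms inside $\mathcal{A}^{\circ}\setminus\mathcal{B}$, passage to the limit through the quadratic functionals) are real but are not addressed in the paper's proof either.
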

\begin{proof}
By the previous result \eqref{meas1}, we know that
\begin{equation*}
\sum_{j=1}^{N_{k}}\vartheta_{j}^{k}\delta_{S(t)\overline{Q}_{k, j}}(E)\rightarrow\mu_{t}(E)
\end{equation*}
as $k\rightarrow\infty$ for any measurable subset $E\subset \mathcal{A}^{\circ}\setminus\mathcal{B}$. Since we have that
\begin{equation*}
c_{\mu_{0}}(r, t)=\lim_{k\rightarrow\infty}\frac{\displaystyle\sum_{j=1}^{N(k)}\vartheta_{j}^{(k)}\int_{H}\int_{\mathbb{R}^{3}}\mathrm{tr}\left(Q(x+r)Q(x)\right)\,dxd\delta_{S(t)\overline{Q}_{j}^{(k)}}(Q)}{\displaystyle\sum_{j=1}^{N(k)}\vartheta_{j}^{(k)}\int_{H}\int_{\mathbb{R}^{3}}\mathrm{tr}\left(Q(x)^{2}\right)\,dxd\delta_{S(t)\overline{Q}_{j}^{(k)}}(Q)}.
\end{equation*}
the result then follows by employing a similar calculation as the one used to achieve \eqref{corrindiv}. 
\end{proof}

\section{The Scaling Regime $L(t)=t$: An Observation}
We now present evidence to support the existence of another scaling regime for the correlation function $c_{\mu_{0}}$, namely $L(t)=t$, when $\mu_{0}$ is concentrated on initial profiles whose $L^{2}(\mathbb{R}^{3})$-norm is \emph{not} restricted in magnitude in the manner of theorem \ref{thm1}. 

Suppose that we offer initial data $Q_{0}\in H$ of the form $Q_{0}=\mathrm{diag}(\lambda_{0}, \lambda_{0}, -2\lambda_{0})$ for equation \eqref{qteneq}. By uniqueness (proposition \ref{globexpro}) we know the solution $Q$ remains of diagonal form $Q=\mathrm{diag}(\lambda, \lambda, -2\lambda)$, where $\lambda$ satisfies the \emph{scalar} nonlinear heat equation
\begin{equation}\label{lambdaeq}
\frac{\partial\lambda}{\partial t}=\Delta\lambda - a\,\lambda- b\,\lambda^{2}-6c\,\lambda^{3}.
\end{equation}
For any $R>0$ we denote by $\lambda_{0, R}:\mathbb{R}^{3}\rightarrow \mathbb{R}$ the map
\begin{displaymath}
\lambda_{0, R}(x):=\left\{
\begin{array}{ll}
\displaystyle \lambda^{\ast}  & \quad \text{if} \quad |x|< R, \vspace{2mm} \\ \displaystyle 0  & \quad \text{otherwise,} 
\end{array}
\right.
\end{displaymath}  
where $\lambda^{\ast}$ is the global minimiser of the bulk potential associated with \eqref{lambdaeq}, namely
\begin{equation*}
\lambda^{\ast}=\min_{\lambda\in\mathbb{R}}\left(\frac{a}{2}\lambda^{2}+\frac{b}{3}\lambda^{3}+\frac{3c}{2}\lambda^{4}\right).
\end{equation*}
In order to comment on the asymptotic behaviour of the correlation function concentrated on such solutions, we require the following result (which we state without proof) that is based on the observations of \textsc{Aronson and Weinberger} \cite{aronson1} and \textsc{Pol\'{a}\u{c}ik} \cite{polacik1} that equation \eqref{lambdaeq} supports \emph{pulse-like} solutions.
\begin{prop}
There exist $R_{0}>0$ and $\overline{c}>0$ such that solutions of \eqref{lambdaeq} subject to initial data $\lambda_{0, R}$ for $R\geq R_{0}$ satisfy
\begin{equation}\label{asyml}
\lim_{t\rightarrow \infty}\|\lambda(\cdot, t)-\lambda^{\ast}\|_{L^{\infty}(B(0, \overline{c}t))}=0,
\end{equation}
where $B(0, \overline{c}t)\subset\mathbb{R}^{3}$ denotes the ball of radius $\overline{c}t$ and centre $0$. Furthermore, for any $t_{0}>0$ there exists $\vartheta=\vartheta(t_{0})$ such that the solution $\lambda$ satisfies the bounds
\begin{equation}\label{asymm}
0\leq \lambda(x, t)\leq Ce^{-\sigma(|x|-\overline{c}t+\vartheta)} \quad \text{on} \quad \mathbb{R}^{3}\times [t_{0}, \infty),
\end{equation}
for some positive constants $C$ and $\sigma$.
\end{prop}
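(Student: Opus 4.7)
The plan is to recast \eqref{lambdaeq} as a standard bistable reaction--diffusion equation and apply the classical travelling wave machinery of Aronson--Weinberger and Fife--McLeod. Writing $f(\lambda):=-a\lambda-b\lambda^{2}-6c\lambda^{3}$, the condition $(a,b,c)\in\mathcal{D}$ guarantees that $f$ has three distinct zeros $\lambda^{\ast}<\lambda_{1}<0$ and that the potential $V(\lambda)=\tfrac{a}{2}\lambda^{2}+\tfrac{b}{3}\lambda^{3}+\tfrac{3c}{2}\lambda^{4}$ is strictly deeper at $\lambda^{\ast}$ than at $0$, so that $\int_{\lambda^{\ast}}^{0}f(\lambda)\,d\lambda>0$. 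It is convenient to work throughout with $u:=-\lambda\geq 0$ (reverting to $\lambda$ at the end), which solves a bistable equation $u_{t}=\Delta u+g(u)$ with $g(u):=-au+bu^{2}-6cu^{3}$ and deeper stable state at $u^{\ast}:=-\lambda^{\ast}>0$. By the theory of bistable travelling waves, the planar ODE $\phi''+\overline{c}\phi'+g(\phi)=0$ admits a unique (up to translation) monotone profile $\phi$ connecting $u^{\ast}$ at $-\infty$ to $0$ at $+\infty$, with a uniquely determined speed $\overline{c}>0$. Linearising at the far tail shows $\phi(\xi)\leq Ce^{-\sigma\xi}$ for all $\xi\in\mathbb{R}$, where $\sigma>0$ is the positive root of $\sigma^{2}-\overline{c}\sigma-a=0$.

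For the spreading estimate \eqref{asyml}, I would fit a radial subsolution of the three-dimensional equation $u_{t}=u_{rr}+\tfrac{2}{r}u_{r}+g(u)$ beneath the initial datum $u(\cdot,0)=u^{\ast}\chi_{B(0,R)}$. Following Aronson--Weinberger, the natural ansatz is a truncated travelling profile $\underline{u}(x,t)=\phi(|x|-\overline{c}t+\tau(t))$ with a shift $\tau$ chosen to satisfy $\dot{\tau}(t)\leq 0$ so as to absorb the curvature drag $\tfrac{2}{r}\phi'$, and arranged so that $\tau(t)=o(t)$ as $t\to\infty$. The constraint $R\geq R_{0}$ is used precisely to clear the ignition threshold for the bistable problem in three dimensions and to ensure $u(\cdot,0)$ dominates $\underline{u}(\cdot,0)$. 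The parabolic comparison principle then yields $u(x,t)\geq\underline{u}(x,t)$, and since $\phi(-\infty)=u^{\ast}$ and $\tau(t)/t\to 0$, the uniform convergence of $u(\cdot,t)$ to $u^{\ast}$ on $B(0,\overline{c}t)$ follows.

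For the envelope \eqref{asymm}, the natural supersolution is the shifted radial travelling profile itself: set $v(x,t):=\phi(|x|-\overline{c}t+\vartheta)$. A direct computation using the wave ODE for $\phi$ together with $\phi'<0$ gives
\begin{equation*}
v_{t}-\Delta v-g(v)=-\tfrac{2}{|x|}\phi'(|x|-\overline{c}t+\vartheta)>0,
\end{equation*}
so that $v$ is a bona fide supersolution on $\{|x|>0\}$. Invoking \eqref{asyml} together with standard parabolic regularity at a fixed time $t_{0}>0$ allows one to select $\vartheta=\vartheta(t_{0})$ so that $u(\cdot,t_{0})\leq\phi(|\cdot|-\overline{c}t_{0}+\vartheta)$ on $\mathbb{R}^{3}$. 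Comparison on $\mathbb{R}^{3}\times[t_{0},\infty)$ then upgrades this to $u(x,t)\leq\phi(|x|-\overline{c}t+\vartheta)$ for all later times, and the exponential tail estimate $\phi(\xi)\leq Ce^{-\sigma\xi}$ delivers the envelope \eqref{asymm} after reverting to $\lambda$.

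The main obstacle is pinning down the ignition radius $R_{0}$: in three dimensions the radial curvature term $\tfrac{2}{r}u_{r}$ acts as a drag that can quench pulses whose initial support is too narrow, and controlling this drag against the reactive source is the delicate content of the Aronson--Weinberger hair-trigger analysis, with sharper quantitative refinements due to Pol\'{a}\v{c}ik. Once $R_{0}$ is in hand, the subsolution and supersolution constructions above, combined with the maximum principle, adapt one-dimensional bistable front propagation theory to the radial three-dimensional setting in a relatively routine fashion.
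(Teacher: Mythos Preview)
The paper does not prove this proposition; it is stated explicitly ``without proof'' and attributed to the observations of Aronson--Weinberger and Pol\'{a}\v{c}ik. Your sketch is precisely the standard route those references take---identify the bistable structure, invoke the unique monotone planar front with speed $\overline{c}>0$, and sandwich the radial solution between shifted travelling profiles via the comparison principle---so there is nothing in the paper to compare against, and your outline is faithful to what the authors had in mind.

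Two minor corrections are worth flagging. First, the integral condition for a positive wave speed should read $\int_{0}^{u^{\ast}}g(u)\,du>0$ (equivalently $V(\lambda^{\ast})<0$, which is exactly the content of $b^{2}>27ac$); your displayed inequality $\int_{\lambda^{\ast}}^{0}f(\lambda)\,d\lambda>0$ has the sign reversed, since that integral equals $V(\lambda^{\ast})$. Second, in the subsolution ansatz $\underline{u}(x,t)=\phi(|x|-\overline{c}t+\tau(t))$ one computes $\underline{u}_{t}-\Delta\underline{u}-g(\underline{u})=(\dot{\tau}-\tfrac{2}{|x|})\phi'$, and since $\phi'<0$ the subsolution inequality forces $\dot{\tau}\geq\tfrac{2}{|x|}>0$ along the front, not $\dot{\tau}\leq 0$: the curvature term \emph{retards} the radial front, so the compensating shift grows (typically $\tau(t)=O(\log t)$). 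Neither slip affects the architecture of the argument, and you correctly identify the ignition-radius threshold $R_{0}$ as the genuinely delicate step that the cited references supply.
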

Utilising the results \eqref{asyml} and \eqref{asymm} above, one may verify that the correlation function
\begin{equation*}
c_{\delta_{Q_{0}}}(r, t)=\frac{\displaystyle\int_{\mathbb{R}^{3}}\lambda(x+r, t)\lambda(x, t)\,dx}{\displaystyle\int_{\mathbb{R}^{3}}\lambda(x, t)\lambda(x, t)\,dx}
\end{equation*} 
concentrated on solutions with initial data $Q_{0}=\mathrm{diag}(\lambda_{0, R}, \lambda_{0, R}, -2\lambda_{0, R})$ for $R\geq R_{0}$ satisfies
\begin{equation*}
\lim_{t\rightarrow\infty}\left\|c_{\delta_{Q_{0}}}(r, t)-P\left(\frac{|r|}{t}\right)\right\|_{L^{\infty}(\mathbb{R}^{3}, \,dr)}=0,
\end{equation*}
where $P$ is a cubic polynomial of the shape
\begin{equation*}
P(z)=\frac{1}{13\overline{c}^{3}}\left(4\overline{c}+z\right)\left(2\overline{c}-z\right)^{2}.
\end{equation*}
Thus, for this one-parameter family of initial data, whose $H$-norm may be made arbitrarily large, we obtain \emph{both} a different universal scaling function (a polynomial, as opposed to the exponential map) and a different form for the coarsening length scale $L(t)$. Some effort would be required to establish the stability of such profiles, leading to a result for the correlation function $c_{\mu_{0}}$, where $\mu_{0}$ is a non-atomic measure supported on an appropriate set of `large' initial data. Finally, it would be of some interest to establish asymptotic self-similarity of $c_{\mu_{0}}$ when $\mu_{0}$ is supported on the class of `large' initial data which look like the union of nematic `islands' previously alluded to in the introduction of this paper. We hope to investigate this problem in future work.
\section{Closing Remarks}
In this paper, we have been able to obtain asymptotic information for $c_{\mu_{0}}(\cdot, t)$ in $L^{\infty}(\mathbb{R}^{3})$ for $\mu_{0}$ probing a rather restricted region of phase space $H$. Ideally, one would like to be able to decompose phase space $L^{2}(\mathbb{R}^{3})\cap L^{6}(\mathbb{R}^{3})=\cup_{j\in J}H_{j}$ in such a manner that given the knowledge $\mathrm{supp}\,\mu_{0}\subseteq H_{i}\subset H$, one could find non-trivial $\Gamma_{i}:\mathbb{R}^{3}\rightarrow\mathbb{R}$ and $L_{i}:(0, \infty)\rightarrow (0, \infty)$ such that
\begin{equation*}
\lim_{t\rightarrow\infty}\left\|c_{\mu_{0}}(r, t)-\Gamma_{i}\left(\frac{r}{L_{i}(t)}\right)\right\|_{L^{\infty}(\mathbb{R}^{3}, dr)}=0.
\end{equation*}
It is in this sense we hope to construct a {\em phase portrait} for $c_{\mu_{0}}$ on $H$ in future work.

Furthermore, our main result is a `small initial data' result which one would expect to hold for a wide class of reaction-diffusion equations, and in particular for scalar equations. It is not clear whether or not the high-dimensional target space of the system \eqref{gradientflowings} supports behaviour of $c_{\mu_{0}}$ distinct from that of correlation functions defined for solutions of scalar equations.

\bibliography{sshreferences}

\vspace{10mm}
\begin{tabular}{l}
{\Large Eduard Kirr}  \\
Department of Mathematics,\\
The University of Illinois at Urbana-Champaign,\\
Illinois, USA.\\
\href{mailto: ekirr@math.uiuc.edu}{\nolinkurl{ekirr@math.uiuc.edu}}\vspace{5mm}\\

{\Large Mark Wilkinson (\Letter)}\\
D\'{e}partement de Math\'{e}matiques et Applications, \\
\'{E}cole Normale Sup\'{e}rieure,\\
Paris, France.\\
\href{mailto:mark.wilkinson@ens.fr}{\nolinkurl{mark.wilkinson@ens.fr}} \vspace{5mm}\\

{\Large Arghir Zarnescu}\\
Department of Mathematics,\\
University of Sussex,\\
Brighton, UK.\\
\href{mailto: a.zarnescu@sussex.ac.uk}{\nolinkurl{a.zarnescu@sussex.ac.uk}}
\end{tabular}
\end{document}